\documentclass{article}
\setlength{\parskip}{\baselineskip}%
\usepackage[svgnames]{xcolor}
\usepackage[left=2.5cm, right=2cm, top=3cm, bottom=1cm]{geometry}
\usepackage[shortlabels]{enumitem}
\usepackage{amsmath,amsthm,amssymb,fancyhdr,graphicx}
\usepackage{tikz}
\usetikzlibrary{positioning,graphs,graphs.standard}
\usepackage{fleqn}
\usepackage{enumitem}
\setlist{nolistsep}
\setlist{nosep}

\hoffset -0.5in
\voffset -0.5in

\lhead{\thepage}
\chead{Non-Hamiltonian 2-regular Digraphs -- Residues}
\rhead{\thepage}
\lfoot{}\cfoot{}\rfoot{}

\newcommand{\family}[1]{\mathcal{#1}}    
\newcommand{\partialG}{\family{A}}       
\newcommand{\fullG}{\family{B}}          
\newcommand{\splicedG}{\family{S}}       

\newcommand{\size}[1]{\text{sz}(#1)}     
\newcommand{\parity}[1]{\pi(#1)}         
\newcommand{\sizep}{\size{p}}            
\newcommand{\parityp}{\parity{p}}        

\newtheorem{theorem}{Theorem}
\newtheorem{lemma}{Lemma}

\newtheorem{prop}{Proposition}

\newcounter{enumTemp}

\begin{document}
\begin{flushleft}
\pagestyle{fancy}

\begin{center}
\huge
Non-Hamiltonian 2-regular Digraphs -- Residues \\
\large
Munagala V. S. Ramanath\\
\end{center}

\begin{abstract}
  In earlier papers, we showed a decomposition of the arcs of 2-diregular digraphs
  (2-dds) and used it to prove some conditions for these graphs to be non-Hamiltonian; we
  then extended this decomposition to a larger class of digraphs and used it to construct
  infinite families of (strongly) connected non-Hamiltonian 2-dds and provided techniques
  to establish non-Hamiltonicity in special cases. In the present paper, for a subclass
  of these graphs, we show connections between non-Hamiltonicity and sets of permutations
  in the full symmetric group S(n) by introducing the concepts of biconjugates, excluded
  sets and residues; we then use these concepts to prove a necessary and sufficient
  condition for non-Hamiltonicity.
\end{abstract}

\section{Introduction}
In [\ref{ref-ram}], we showed that the Hamiltonian Circuit (HC) problem is NP-complete
even for the family of 2-diregular digraphs ({\sl 2-dds}) using an earlier result from
[\ref{ref-plesnik}]. We then presented an efficiently computable decomposition of the
arc-set of a 2-dd into alternating cycles (ACs) and used it to prove some conditions
for these graphs to be non-Hamiltonian. For example, an infinite subfamily has the
property that all factors of graphs in this family have an even number of cycles and
therefore are non-Hamiltonian; the AC decomposition allows us to easily identify graphs
of this family. It also yields a simple algorithm to enumerate all $2^K$ factors ($K$ is
the number of ACs) of a a 2-dd. This algorithm is exponential only in $K$, so for a
fixed $K$ it yields a polynomial-time algorithm to solve HC for the family of 2-dds whose
AC count does not exceed $K$.

Given this significance of the AC-count, any technique or criterion that allows us to
simplify or reduce a 2-dd to another with a smaller $K$ while preserving Hamiltonicity
is useful. In [\ref{ref-ramroute}] we presented a few such results; these results are
also useful to construct larger non-Hamiltonian 2-dds from smaller ones. The present
paper continues the exploration of these graphs by showing a relationship of
non-Hamiltonicity with sets of permutations in the full symmetric group $S_n$ and using
it to establish a necessary and sufficient condition for non-Hamilitonicity. We then
use this characterization to present additional techniques for establishing
non-Hamiltonicity in special cases and present numerical results of exhaustive
generation of these graphs to demonstrate the effectiveness of these methods.

HC has been studied for many decades; a good summary of many classical results appear
in Chapter 10 (p. 186) of [\ref{ref-berge}]. Many of these results prove that a graph
must have an HC provided it has a large number of arcs (e.g the result of
{\sl A. Ghoula-Houri} discussed in [\ref{ref-berge}, p. 195]. As Berge observes there:
{\em ``Clearly, the greater the demi-degrees} $d_G^+(x)$ {\em and} $d_G^-(x)$ {\em of a
  1-graph are, the greater are the chances that $G$ has a hamiltonian circuit''}. Other
results conclude that if an HC exists, then there must be more than one if various
conditions are satisfied.

However, there are very few results that provide conditions for a graph $G$ to {\bf not}
have an HC, especially for regular graphs with low degree. Occasionally, individual
graphs appear as counter-examples for conjectures; for example, the Tutte graph that
disproved a long-standing conjecture of Tait; many other such counter-examples are also
discussed in [\ref{ref-berge}]. Another is the Meredith graph [\ref{ref-meredith}] which
disproved a conjecture of Crispin Nash-Williams. Our work, on the other hand, explores
non-Hamiltonicity in graphs with few arcs (a 2-dd has $2n$ arcs where $n$ is the vertex
count).

\section{Definitions}
\label{sec-def}
Let $\overline{A}_n = S_n - A_n$ denote the complement of the alternating group (i.e. the
set of odd permutations) and $C_n$ the set of  {\em cyclic permutations}
(i.e. those with a single cycle and no fixed points). For $p \in S_n$, define
$\sizep$, the {\em size} of $p$, to be the cycle count of $p$ --- so a cyclic
permutation is one whose size is 1. Define a non-empty set $P \subset S_n$ to be
{\em odd} or {\em even} according as $P \subset \overline{A}_n$ or $P \subset A_n$ and
{\sl uniform} if it is either odd or even.

For a permutation $p \in S_n$, we define $\parityp$ to be $0$ or $1$ according as $p$ is
even or odd and extend it to uniform sets: $\parity{P} = 0$ if $P$ is even, $1$
otherwise; similarly, for an integer $n$, $\parity{n} = 0$ if $n$ is even, $1$ otherwise.
It is well known that $\parityp = 0 \iff \parity{n} = \parity{\sizep}$, so for a uniform
set $P$, $\parity{\sizep}$ is the same for all $p \in P$. (Some authors use the
{\sl sign} function sgn$()$ for permutation parity but this is not suitable for us since
we want to the use the same notation for integers but the sign of an integer already has
a different meaning).

In addition to the above definitions, we use most of the definitions from
[\ref{ref-ramroute}]; they are reproduced here for convenience.

Let $G = (V,A)$ be a digraph. As usual, we say that a subgraph $H$ is a {\em component}
if it is a connected component of the underlying (undirected) graph. We use $c(G)$ to
denote the number of components of $G$.
For an arc $e = (u, v)$ of $G$, $u$ the {\it start-vertex} and $v$ the {\it end-vertex}
of $e$; $u$ is a {\em predecessor} of $v$, $v$ is a {\em successor} of $u$, $e$ is an
\emph{in-arc} of $v$ and \emph{out-arc} of $u$.

For a subset $U \subset V$, we use $U_{in}$ and $U_{out}$ for the set of arcs entering
[exiting] some vertex of $U$ from [to] $V - U$. When $U = \{v\}$, we write $v_{in}$ and
$v_{out}$.

$G$ is called a {\it k-digraph} iff for every vertex $v$ we have:
\begin{align*}
  |v_{in}| &= k \; {\rm and} \; |v_{out}| = 0; \; {\rm or} \\
  |v_{in}| &= 0 \; {\rm and} \; |v_{out}| = k; \; {\rm or} \\
  |v_{in}| &= |v_{out}| = k
\end{align*}

Suppose $F$ is a 1-digraph. Clearly, it must be a (disjoint) collection of components
each of which is either a simple cycle or a simple non-cyclic path; let $F_c$ and $F_p$
denote, respectively, the sets of these components. The {\em index} of $F$ denoted by
$i(F)$ is the number of cycles in it, i.e. $|F_c|$. We call $F$ {\it open} if
$i(F) = 0$, {\it closed} otherwise. If $F_p = \emptyset$, $F$ can be viewed as a
permutation and, in this case, $F$ is {\em even} or {\em odd} according as the
permutation is even or odd. As noted in Lemma 1 of [\ref{ref-ram}] $F$ is even iff
$|V|$ and $i(F)$ have the same parity.

Suppose $G$ is a k-digraph. A 1-digraph that is a spanning subgraph of $G$ will be called
a {\em factor} of $G$ ({\em difactor} is more appropriate but since all graphs in
this document are directed, we use the shorter term for brevity).

$V$ can be partitioned into three disjoint subsets, based on the in- and out-degree at
each vertex, called respectively the set of {\it entry}, {\it exit} and {\it saturated}
vertices:
\begin{align*}
  V_{entry} &= \{ v \in V \; | \; |v_{in}|  = 0, |v_{out}| = k \}\\
  V_{exit}  &= \{ v \in V \; | \; |v_{in}|  = k, |v_{out}| = 0 \}\\
  V_{sat}   &= \{ v \in V \; | \; |v_{in}| = |v_{out}| = k \}
\end{align*}

The entry and exit vertices are {\it unsaturated} vertices.
$G$ is {\em closed} if {\em all} of its factors are {\em closed}, {\em open} otherwise.
A Hamiltonian circuit of $G$ is clearly a (closed) factor of $G$.

A 1-digraph $F$ where $V_{entry} \neq \emptyset$ defines a unique bijection $r_F$ from
$V_{entry}$ to $V_{exit}$ where $r_F(u)$ is the unique exit vertex of $F$ where the simple
path beginning at $u$ ends. This bijection will be called the {\em route} defined by $F$.
Notice that the route is defined based solely on the extremities of the paths of $F_p$
and ignores the internal vertices of those paths as well as all of $F_c$. So distinct
factors can yield the same route. A route is {\em open} if it is defined by at least one
open factor, {\em closed} otherwise.

A k-digraph is called a {\em k-diregular digraph} ($k$-dd) if all of its vertices are
saturated; it is sometimes convenient in this case to say that $G$ is {\em saturated}.
Clearly, a component of a $k$-digraph must also be a $k$-digraph (and {\em may} be a
2-dd); likewise, a component of a $k$-dd must also be a $k$-dd.

We use $\partialG$ and $\fullG$ to denote the families of 2-digraphs
and 2-dds respectively. Clearly we have $\fullG \subset \partialG$.

We now generalize the notion of alternating cycles, defined for $\fullG$
in [\ref{ref-ram}], to $\partialG$.

Let $G = (V,A) \in \partialG$. A sequence of $2r$ {\em distinct} arcs
$X = (e_0, e_1, ..., e_{2r-1})$ in $A$ is an {\it alternating cycle (AC)}
if $e_i$ and $e_{i \oplus 1}$ have a common end-vertex [start-vertex] if $i$ is even [odd],
where $\oplus$ denotes addition mod $2r$. The arcs of an alternating cycle can be
partitioned into two disjoint sets called the set of {\em forward} and {\em backward}
arcs:
$X_f = \{ e_i \; | \; i \; \mbox{is even}\}$ and
$X_b = \{ e_i \; | \; i \; \mbox{is odd}\}$.
In [\ref{ref-ram}] we use the terms {\em clockwise} and {\em anti-clockwise} for these
arcs. The naming of the sets as ``forward'' and
``backward'' is arbitrary since traversing the arcs in the opposite direction switches
the sets. $X$ is called {\it even} or {\it odd} according as $r$ is even or odd. Clearly,
we must have $|X_f| = |X_b| = r$.

Informally, an AC is formed by starting with an arc, traversing it
forward, then traversing the next arc backward (which is guaranteed to be uniquely
possible since the end-vertex has indegree 2); this process of alternating forward
and backward traversals is continued until we return to the starting arc. So, an AC
always has an even number of arcs, half of which are forward and the rest backward.

Clearly, the arcs of $G$ can be uniquely partitioned into ACs in linear time.
As shown in Proposition~2 of [\ref{ref-ramroute}], a factor of $G$ can equivalently be
defined as a subgraph that includes all of $X_f$ and none of $X_b$ or vice versa for
every AC $X$ in $G$. A cycle [path] in $G$ is called a {\em difactorial cycle}
[{\em difactorial path}] if it is part of some factor of $G$; for any AC $X$, such a
cycle [path] cannot intersect both $X_f$ and $X_b$ (it may intersect neither).

We will henceforth represent $G \in \partialG$ by the triple $(V, A, C)$ where $C$
is the set of ACs.
The {\em index} of $G$, denoted by $i(G)$ is the smallest index among its factors, i.e.
\[
i(G) = \text{min}\{ i(F) \; | \; F \; \text{is a factor of} \; G\}
\]
Clearly, $G$ is open iff $i(G) = 0$; if $G \in \fullG$, it is Hamiltonian iff $i(G) = 1$.

Let $K \subseteq C, K \neq \emptyset$ and $G^K = (V^K, A^K, K)$ be the subgraph induced
by $K$. Clearly, $G^K \in \partialG$ and so $V^K$ is partitioned into sets
$V^K_{entry}, V^K_{exit}, V^K_{sat}$. We will use $K$ and $G^K$ interchangeably when there
is little chance of confusion. The complement $\overline{K} = C-K$ also induces a
subgraph $G^{\overline{K}}$ or simply $\overline{K}$. When $K = \{X\}$ is a singleton, we
use $K$ and $X$ interchangeably. $G^K$ is {\em proper subgraph} if $K$ is a proper
subset of $C$. {\em In the rest of this document, the term} {\bf subgraph}
{\em will almost always mean such a 2-digraph induced by a subset of $C$}.

By {\em splitting} a saturated vertex $v$ we mean replacing it with 2 vertices, $v^{in}$
and $v^{out}$, where the two arcs of $v_{in}$ [$v_{out}$] become in-arcs [out-arcs] of
$v^{in}$ [$v^{out}$]. The inverse operation of {\em splicing} consists of identifying an
entry vertex $u$ with an exit vertex $v$ to create a new saturated vertex. Both
operations yield a 2-digraph.

A subset $S \subseteq V_{sat}$ is called a {\em split-set} if splitting all the vertices
of $S$ increases the number of components. $S$ is a {\em minimal split-set} if no
proper subset of $S$ is also a split-set. When $S$ is split, the components resulting
from the split are called {\em split-components}. $G$ is called {\em k-splittable}
if it has a split-set with $k$ elements and {\em minimally k-splittable} if that split
set is minimal.

For a factor $F$ of $G$, let $\overline{F}$ denote the subgraph induced by the
complementary arc-set $A-F$. Since $G \in \partialG$, clearly, $\overline{F}$ must
also be a factor of $G$. We call it the {\em complementary factor} or simply the
{\em complement}. Clearly if $G \in \fullG$, we must have $F_p = \emptyset$ for all
factors $F$.

A 2-dd $G \in \fullG$ is {\em odd} [{\em even}] iff $i(F)$ is odd [even] for every
factor $F$ (it is possible that $G$ is neither). We call a pair of 2-dds
{\em H-equivalent} iff they are either both Hamiltonian or both non-Hamiltonian.

When discussing families of 2-digraphs, there are many combinations of properties that
are significant such as the size of ACs, whether the ACs are all odd, whether the graphs
themselves are open/closed or clean/dirty, whether all factors have odd index, etc. To
avoid verbose repetition, we use the following notation:
For any family of 2-digraphs, we use the {\bf superscript} $2k$ to denote the sub-family
where each AC has exactly $2k$ arcs and {\em clean} [{\em dirty}] the sub-family of clean
[dirty] graphs.
Similarly, we use the {\bf subscript} $m$ to denote the sub-family where each graph has
exactly $m$ ACs and finally, {\em odd} [{\em even}] to denote the sub-family where every
AC is odd [even].

A 2-dd $G \in \fullG$ is {\em odd} [{\em even}] iff $i(F)$ is odd [even] for every
factor $F$ (it is possible that $G$ is neither). We call a pair of 2-dds
{\em H-equivalent} iff they are either both Hamiltonian or both non-Hamiltonian.

So, for notational illustration, we have the family hierarchies:
\[\partialG^{2k,clean}_{m} \subset \partialG^{2k}_m \subset \partialG^{2k} \subset \partialG
\] \[
\fullG^{2k,clean}_{m,odd} \subset \fullG^{2k,clean}_{m}
\subset \fullG^{2k}_m \subset \fullG^{2k} \subset \fullG
\] \[
\fullG^{clean}_{m, odd} \subset \partialG^{clean}_{m} \enspace \mbox{and} \enspace
\fullG^{clean}_{m, even} \subset \partialG^{clean}_{m}
\]

%
\section{Residues}\label{sec-residues}
Given a connected open 2-graph $F = (V, A, C) \in \partialG_{odd}$ with
$|V_{exit}| = n > 0$, we know from [\ref{ref-ramroute}, Corollary (a), Theorem 2] that
we can label the elements of $V_{entry}$ and $V_{exit}$ with $[1,n]$ thus mapping each
open route to a permutation in $S_n$; we also know that the set $P$ of such permutations
is uniform. Let $F' = (V', A', C')$ be another such graph and $Q$ the corresponding set
of permutations.

Suppose we splice $V_{exit}$ with $V'_{entry}$ arbitrarily (in a bijective way) and let
$x \in S_n$ represent that mapping (i.e. vertex $i$ is spliced with $x(i)$); similarly,
splice $V'_{exit}$ with $V_{entry}$ and let $y \in S_n$ represent that mapping. Clearly
this splicing yields a 2-dd in $G \in \fullG_{odd}$. Let us use $\splicedG(F, F', x, y)$
to denote this spliced 2-dd. The situation is illustrated in Figure~\ref{fig-splice}
where we've represented the sets $P$ and $Q$ of open routes in each graph by three
parallel curved arrows. We refer to $x$ and $y$ as {\em splicing permutations}.

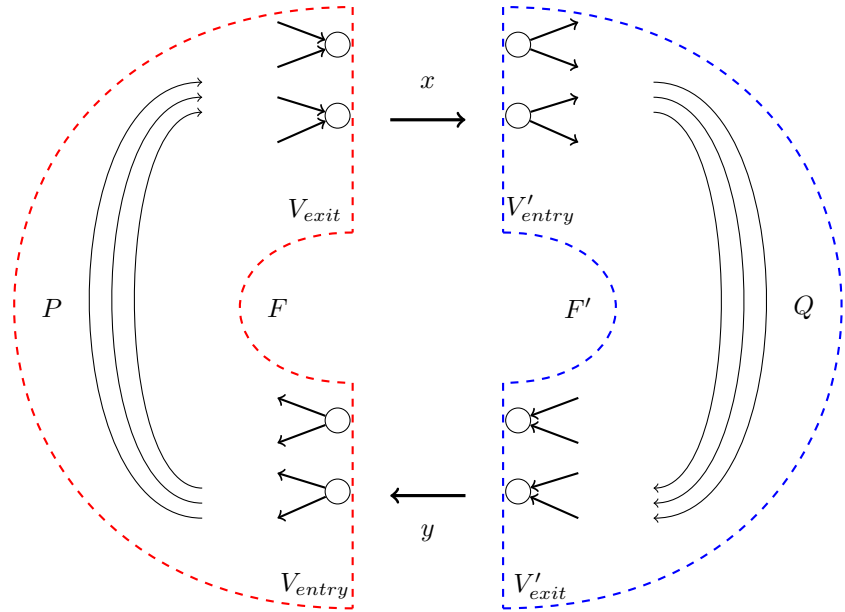
\begin{figure}[ht]
  \centering
  \begin{tikzpicture}

  \tikzset{
    note/.style={draw=none, fill=none, align=center}
  }

  \def\xA{4} \def\yA{8}
  \draw[red,thick,dashed] (\xA,0) .. controls (-2,0) and (-2,\yA) .. (4,\yA);
  \draw[red,thick,dashed] (\xA,\yA) -- (\xA,5);
  \draw[red,thick,dashed] (\xA,5) .. controls (2,5) and (2,3) .. (\xA,3);
  \draw[red,thick,dashed] (\xA,3) -- (\xA,0);

  \def\xB{6} \def\yB{8}
  \draw[blue,thick,dashed] (\xB,0) .. controls (12,0) and (12,8) .. (\xB,8);
  \draw[blue,thick,dashed] (\xB,\yB) -- (\xB,5);
  \draw[blue,thick,dashed] (\xB,5) .. controls (8,5) and (8,3) .. (\xB,3);
  \draw[blue,thick,dashed] (\xB,3) -- (\xB,0);

  \draw[->, line width=1.2pt] (4.5,6.5) -- (5.5,6.5);    
  \draw[->, line width=1.2pt] (5.5,1.5) -- (4.5,1.5);    
  \node (x) at (5, 7) {$x$};
  \node (y) at (5, 1) {$y$};

  \node (F1) at (3, 4) {$F$}; \node (F2) at (7, 4) {$F'$};

  \node[circle, draw] (ex11) at (3.8, 7.5) {};            
  \draw[->, line width=0.8pt] (3, 7.8) -- (ex11);
  \draw[->, line width=0.8pt] (3, 7.2) -- (ex11);
  \node[circle, draw, below=0.6cm of ex11] (ex12) {};     
  \draw[->, line width=0.8pt] (3, 6.8) -- (ex12);
  \draw[->, line width=0.8pt] (3, 6.2) -- (ex12);

  \node[circle, draw] (en11) at (3.8, 2.5) {};            
  \draw[->, line width=0.8pt] (en11) -- (3, 2.8);
  \draw[->, line width=0.8pt] (en11) -- (3, 2.2);
  \node[circle, draw, below=0.6cm of en11] (en12) {};     
  \draw[->, line width=0.8pt] (en12) -- (3, 1.8);
  \draw[->, line width=0.8pt] (en12) -- (3, 1.2);

  \node (EX1) at (3.5, 5.3) {$V_{exit}$}; \node (EN1) at (3.5, 0.3) {$V_{entry}$};

  \node[circle, draw] (en21) at (6.2, 7.5) {};            
  \draw[->, line width=0.8pt] (en21) -- (7, 7.8);
  \draw[->, line width=0.8pt] (en21) -- (7, 7.2);
  \node[circle, draw, below=0.6cm of en21] (en22) {};     
  \draw[->, line width=0.8pt] (en22) -- (7, 6.8);
  \draw[->, line width=0.8pt] (en22) -- (7, 6.2);

  \node[circle, draw] (ex21) at (6.2, 2.5) {};            
  \draw[->, line width=0.8pt] (7, 2.8) -- (ex21);
  \draw[->, line width=0.8pt] (7, 2.2) -- (ex21);
  \node[circle, draw, below=0.6cm of ex21] (ex22) {};     
  \draw[->, line width=0.8pt] (7, 1.8) -- (ex22);
  \draw[->, line width=0.8pt] (7, 1.2) -- (ex22);

  \def\qxA{8} \def\qyA{7} \def\qxB{8} \def\qyB{1.2}    
  \def\cA{10} \def\cC{9.6} \def\cE{9.2}
  \draw[->] (\qxA,\qyA) .. controls (\cA,\qyA) and (\cA,\qyB) .. (\qxB,\qyB);
  \def\qxC{8} \def\qyC{6.8} \def\qxD{8} \def\qyD{1.4}    
  \draw[->] (\qxC,\qyC) .. controls (\cC,\qyC) and (\cC,\qyD) .. (\qxD,\qyD);
  \def\qxE{8} \def\qyE{6.6} \def\qxF{8} \def\qyF{1.6}    
  \draw[->] (\qxE,\qyE) .. controls (\cE,\qyE) and (\cE,\qyF) .. (\qxF,\qyF);

  \def\pxA{2} \def\pyA{7} \def\pxB{2} \def\pyB{1.2}    
  \def\cA{0} \def\cC{0.4} \def\cE{0.8}
  \draw[->] (\pxB,\pyB) .. controls (\cA,\pyB) and (\cA,\pyA) .. (\pxA,\pyA);
  \def\pxC{2} \def\pyC{6.8} \def\pxD{2} \def\pyD{1.4}    
  \draw[->] (\pxD,\pyD) .. controls (\cC,\pyD) and (\cC,\pyC) .. (\pxC,\pyC);
  \def\pxE{2} \def\pyE{6.6} \def\pxF{2} \def\pyF{1.6}    
  \draw[->] (\pxF,\pyF) .. controls (\cE,\pyF) and (\cE,\pyE) .. (\pxE,\pyE);

  \node (P) at (0, 4) {$P$}; \node (Q) at (10, 4) {$Q$};

  \node (EX2) at (6.5, 5.3) {$V'_{entry}$}; \node (EN2) at (6.5, 0.3) {$V'_{exit}$};

\end{tikzpicture}
  \caption{Splicing a pair of 2-graphs}
  \label{fig-splice}
\end{figure}

What can we say about the Hamiltonicity of $G$ ? Clearly a Hamiltonian circuit of $G$
corresponds to an element in $PxQy \cap C_n$ and conversely, any element of that
intersection corresponds to one or more Hamiltonian circuits; so $G$ is non-Hamiltonian
whenever $PxQy \cap C_n = \varnothing$. To investigate this connection
further, we introduce the notions of {\sl biconjugates}, {\sl excluded sets} and
{\sl residues} in $S_n$.

We begin with a simple proposition about permutations.
\begin{prop} \label{prop-perm-rotation}
  Suppose $a = a_1a_2...a_ia_{i+1}...a_n \in S_n$ is a product of $n$ permutations and
  $b$ is a rotated product $b = a_{i+1}...a_na_1a_2...a_i$; then, $a$ and $b$ are
  conjugates and have the same cycle type.
\end{prop}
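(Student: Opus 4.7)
The plan is to reduce the statement to the classical fact that for any permutations $u, v \in S_n$, the products $uv$ and $vu$ are conjugate in $S_n$, and then quote the standard fact that conjugate permutations share the same cycle type.

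First I would group the factors: set $u = a_1 a_2 \cdots a_i$ and $v = a_{i+1} \cdots a_n$, so that $a = uv$ and $b = vu$. Everything else in the statement is a distraction — the ``product of $n$ permutations'' structure is not used beyond allowing this two-block split at the rotation point.

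Next I would exhibit the conjugating element explicitly. Multiplying $a = uv$ on the left by $u^{-1}$ and on the right by $u$ gives $u^{-1} a u = u^{-1}(uv)u = vu = b$. Hence $b$ is the conjugate of $a$ by $u^{-1}$ (equivalently, $a = u b u^{-1}$), so $a$ and $b$ lie in the same conjugacy class of $S_n$.

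Finally I would invoke the well-known description of conjugacy classes of $S_n$ by cycle type: if $a = (\alpha_1 \alpha_2 \cdots \alpha_k) \cdots$ is the cycle decomposition of $a$, then conjugating by any $\sigma$ gives $\sigma a \sigma^{-1} = (\sigma(\alpha_1)\, \sigma(\alpha_2) \cdots \sigma(\alpha_k)) \cdots$, which has the identical multiset of cycle lengths. Therefore $a$ and $b$ have the same cycle type, and in particular the same size $\size{a} = \size{b}$ and the same parity. There is no real obstacle here; the only thing to be careful about is recognizing that any cyclic rotation of a product of $n$ factors, no matter how many positions, corresponds to a single ``cut'' that separates the factors into a left block $u$ and a right block $v$, which is exactly the case covered above.
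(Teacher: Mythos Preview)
Your proof is correct and takes essentially the same approach as the paper: the paper sets $x = a_1 a_2 \cdots a_i$ (your $u$) and observes $x^{-1} a x = b$, which is exactly your conjugation argument. Your version merely spells out the cycle-type consequence more explicitly.
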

\begin{proof}
Let $x = a_1a_2...a_i$. The result now follows since $x^{-1}ax = b$.
\end{proof}

The following remarks are now obvious (arbitrary $P,Q \subset S_n$):\\
{\sl REMARKS}:
\begin{enumerate}
  \setcounter{enumi}{\theenumTemp}
  \item $P$ and $Q$ are uniform iff $PQ$ is and $\pi(PQ)$ is $0$ or $1$ according as
    $\pi(P) = \pi(Q)$ or not.
  \item \label{rem-Cn-parity}
    $C_n$ is uniform and $\pi(C_n) \neq \pi(n)$.
  \setcounter{enumTemp}{\theenumi}
\end{enumerate}

Let $T = PxQy$. There are a some easy cases where we can show that
$T \cap C_n = \varnothing$:
\begin{prop} \label{prop-boring}
  $T \cap C_n = \varnothing$ in all of the following cases:
  \begin{enumerate}
    \item $\parity{n} = 0$, $\parity{P} = \parity{Q}$, $\parity{x} = \parity{y}$.
    \item $\parity{n} = 0$, $\parity{P} \neq \parity{Q}$, $\parity{x} \neq \parity{y}$.
    \item $\parity{n} = 1$, $\parity{P} = \parity{Q}$, $\parity{x} \neq \parity{y}$.
    \item $\parity{n} = 1$, $\parity{P} \neq \parity{Q}$, $\parity{x} = \parity{y}$.
  \end{enumerate}
\end{prop}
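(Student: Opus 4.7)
The plan is to reduce the statement to a single parity check. By Remark 1 (applied iteratively, treating $\{x\}$ and $\{y\}$ as uniform singletons with parities $\parity{x}$ and $\parity{y}$), the product $T = PxQy$ is itself uniform, and
\[
  \parity{T} \;\equiv\; \parity{P} + \parity{x} + \parity{Q} + \parity{y} \pmod 2.
\]
On the other hand, Remark~\ref{rem-Cn-parity} tells us $C_n$ is uniform with $\parity{C_n} \neq \parity{n}$. Since $T$ and $C_n$ are both uniform, they can intersect only if they have the same parity, i.e.\ only if $\parity{T} = \parity{C_n}$, equivalently $\parity{T} \neq \parity{n}$. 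So it suffices to verify that in each of the four listed cases we have $\parity{T} = \parity{n}$.

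The verification is then a routine parity tabulation. In case (1), $\parity{P}=\parity{Q}$ and $\parity{x}=\parity{y}$ make the sum $\parity{P}+\parity{Q}+\parity{x}+\parity{y}$ even, matching $\parity{n}=0$. In case (2), both $\parity{P}+\parity{Q}$ and $\parity{x}+\parity{y}$ contribute $1$, again summing to $0\bmod 2 = \parity{n}$. In cases (3) and (4) exactly one of the two pair-sums is $1$ and the other $0$, so $\parity{T} = 1 = \parity{n}$. In every case $\parity{T} = \parity{n}$, hence $\parity{T} \neq \parity{C_n}$ and therefore $T \cap C_n = \varnothing$.

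There is no real obstacle here: the whole argument is parity bookkeeping built on top of the two preceding remarks plus the well-known fact that parity is multiplicative. The only thing one has to be careful about is that Remark~1 is stated for two sets, so one should explicitly note that it extends to any finite product of uniform sets (and that singleton sets are trivially uniform with the obvious parity), so that it legitimately applies to the four-fold product $P \cdot \{x\} \cdot Q \cdot \{y\}$. Once that extension is in hand, the four cases collapse to a single table of $\mathbb{Z}/2\mathbb{Z}$ arithmetic.
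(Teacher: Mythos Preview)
Your proof is correct and is essentially the same argument as the paper's: both compute the parity of $T$ (using that parity is additive over the product $PxQy$) and compare it with $\parity{C_n}$ from Remark~\ref{rem-Cn-parity}, concluding that the parities differ in all four cases so the intersection is empty. The paper states this in one line per pair of cases, while you spell out the $\mathbb{Z}/2\mathbb{Z}$ arithmetic more explicitly, but there is no substantive difference.
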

\begin{proof}
  The sets $C_n$ and $T$ cannot intersect in the first two cases since $\parity{C_n} = 1$
  and $\parity{T} = 0$; likewise, in the last two cases, since $\parity{C_n} = 0$ and
  $\parity{T} = 1$. Note that in all cases, $\parity{\size{t}} = 0$ for all $t \in T$.
\end{proof}

This proposition yields four sufficient conditions for the spliced graph to be
non-Hamiltonian; however these cases are not very interesting since, as observed in the
proof, the size of all elements of $T$ is even, so all factors have even size, thereby
rendering the graph trivially non-Hamiltonian and, as noted in our earlier work, this
situation is computationally easy to verify. The cases of interest are the
``opposites'' of those listed above and are enumerated in the next proposition:
\begin{prop} \label{prop-interesting}
  $\parity{C_n} = \parity{T}$ and $\size{t}$ is odd for all $t \in T \cap C_n$ in all of
  these cases:
  \begin{enumerate}
    \item $\parity{n} = 0$, $\parity{P} =    \parity{Q}, \parity{x} \neq \parity{y}$.
    \item $\parity{n} = 0$, $\parity{P} \neq \parity{Q}, \parity{x} =    \parity{y}$.
    \item $\parity{n} = 1$, $\parity{P}  =    \parity{Q}, \parity{x} =    \parity{y}$.
    \item $\parity{n} = 1$, $\parity{P}  \neq \parity{Q}, \parity{x} \neq \parity{y}$.
  \end{enumerate}
\end{prop}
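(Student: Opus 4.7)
The plan is a direct parity calculation built on the two remarks just established. Iterating Remark 1 gives $\parity{T} \equiv \parity{P}+\parity{x}+\parity{Q}+\parity{y} \pmod 2$, while Remark 2 gives $\parity{C_n} \equiv 1+\parity{n} \pmod 2$. The first conclusion is then a matter of evaluating these two expressions in each of the four cases and checking they agree; the second conclusion follows by feeding the resulting parity of $T$ into the identity $\parity{p}=0 \iff \parity{n}=\parity{\sizep}$.

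For the comparison, in Cases 1 and 2 we have $\parity{n}=0$, hence $\parity{C_n}=1$; and $\parity{P}+\parity{Q}+\parity{x}+\parity{y}$ equals $0+1$ in Case 1 and $1+0$ in Case 2, so $\parity{T}=1$ in both. In Cases 3 and 4 we have $\parity{n}=1$, hence $\parity{C_n}=0$; and the same sum equals $0+0$ in Case 3 and $1+1$ in Case 4, so $\parity{T}=0$ in both. This establishes $\parity{T}=\parity{C_n}$ throughout.

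For the size claim, note that $T$ is uniform (being a product of the uniform sets $P$, $\{x\}$, $Q$, $\{y\}$ via Remark 1), so every $t \in T$ shares $\parity{t}=\parity{T}$. Rewriting the standard identity as $\parity{\size{t}} \equiv \parity{n}+\parity{t} \pmod 2$, the fact that $\parity{T}=\parity{C_n}\neq \parity{n}$ in every case immediately gives $\parity{\size{t}}=1$, i.e.\ $\size{t}$ is odd. In particular this holds for every $t \in T \cap C_n$.

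There is no real obstacle here; the proposition is essentially a truth-table exercise. The only minor subtlety is observing that the single statement "$\parity{T}$ and $\parity{n}$ have opposite parities in every case" simultaneously encodes both conclusions — the equality $\parity{T}=\parity{C_n}$ by Remark 2, and the oddness of $\size{t}$ by the parity-cycle identity — so once the four additions modulo 2 are carried out, both halves drop out at once.
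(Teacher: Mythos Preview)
Your proof is correct and is exactly the explicit parity bookkeeping that the paper's one-line ``Obvious from discussion above'' is gesturing at: iterate Remark~1 to get $\parity{T}$, compare with $\parity{C_n}$ via Remark~2, and read off the cycle-count parity from the standard identity. The only thing to note is that you in fact prove the stronger claim that $\size{t}$ is odd for \emph{all} $t\in T$ (not just $t\in T\cap C_n$, where it is automatic since $\size{t}=1$), which is the natural counterpart to the observation in the proof of Proposition~\ref{prop-boring} and is presumably what was intended.
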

\begin{proof} Obvious from discussion above.
\end{proof}

In these cases, the resulting spliced graphs may or may not be Hamiltonian and are
examined further below.

Given $Q, Q' \subseteq S_n$, we say that $Q'$ is an $(x,y)$-{\em biconjugate} (or simply
a {\em biconjugate}) of $Q$ and write $Q' = Q^{x,y}$ iff there are permutations
$x, y \in S_n$ such that $Q' = xQy$. Clearly, biconjugacy is an equivalence relation.

The following remarks follow easily:\\
{\em REMARKS}: Let $Q' = Q^{x,y}$.
\begin{enumerate}
  \setcounter{enumi}{\theenumTemp}
  \item $|Q| = |Q'|$; $Q'$ is uniform iff $Q$ is uniform.
  \item \label{rem-same-parity}
    If $Q$ is uniform, $\parity{Q} = \parity{Q'} \iff \parity{x} = \parity{y}$.
  \item \label{rem-conjugate-An} $A_n = A_n^{x,y}$ iff $\parity{x} = \parity{y}$;
    otherwise, $\overline{A}_n = A_n^{x,y}$. Similarly,
    $\overline{A}_n = \overline{A}_n^{x,y}$ iff $\parity{x} = \parity{y}$; otherwise
    $A_n = \overline{A}_n^{x,y}$.
  \item \label{rem-conjugate-Cn} $C_n$ is invariant under conjugation.
  \item Any two singleton sets $Q = \{q\}$ and $Q' = \{q'\}$ are biconjugate since
    $q' = q^{-1} q q'$. An example of a pair of subsets of $S_4$ that are even but
    {\em not} biconjugate is $P = \{I, (2 3 4)\}, P' = \{I, (12)(34)\}$.
  \item \label{rem-biconjugage-cardinality} $Q Q^{-1}$ and $Q' Q'^{-1}$ are conjugates.
  \item \label{rem-pq-qp}
    $PxQy \cap C_n = \varnothing \iff QyPx \cap C_n = \varnothing$ \\
  {\sl Proof}: Follows from the fact that for any two permutations $a, b \in S_n$,
  $ab$ and $ba$ are conjugates (Proposition \ref{prop-perm-rotation}) and hence have the
  same cycle type.

  \setcounter{enumTemp}{\theenumi}
\end{enumerate}

For any uniform $P$ and $p \in P$, define the {\em excluded set}s $E_p$ and $E_P$ as
$E_p = p^{-1} C_n$, $E_P = P^{-1} C_n$.

\begin{prop} \label{prop-exclusion-set}
  $\parity{E_P} = 1$ or $0$ according as $\parity{P} = \parity{n}$ or not.
\end{prop}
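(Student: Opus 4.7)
The plan is to express $E_P$ as a product of two uniform sets and then track parity through the earlier remarks, which already do most of the work. Writing $E_P = P^{-1} C_n$, I would first note that $P^{-1}$ is uniform with $\parity{P^{-1}} = \parity{P}$, since permutation parity is preserved under inversion. By Remark \ref{rem-Cn-parity}, $C_n$ is uniform with $\parity{C_n} \neq \parity{n}$, i.e., $\parity{C_n} \equiv 1 + \parity{n} \pmod{2}$.

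Next, I would invoke the first remark on products of uniform sets: a product of two uniform subsets of $S_n$ is uniform, with parity $0$ iff the two factors have equal parity. Applied to $P^{-1} \cdot C_n$, this gives $\parity{E_P} = 0 \iff \parity{P^{-1}} = \parity{C_n}$, which by the preceding step is equivalent to $\parity{P} \neq \parity{n}$. Taking the contrapositive yields $\parity{E_P} = 1 \iff \parity{P} = \parity{n}$, which is exactly the claim.

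The only things worth double-checking are that $E_P$ is nonempty (so that $\parity{E_P}$ is well-defined) and that the cited remarks genuinely apply here; the former holds because $P$ is nonempty by assumption and $C_n$ is nonempty for $n \geq 1$, and the latter is immediate from the definitions. There is no real obstacle in this argument — the proposition reduces to a short computation in the parity calculus already established, and the only mild subtlety is remembering to use $\parity{P^{-1}} = \parity{P}$ rather than conflating $P^{-1}$ with $P$ setwise.
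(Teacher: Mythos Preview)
Your proof is correct and follows essentially the same idea as the paper's: both compute the parity of $E_P = P^{-1}C_n$ from the known parities of $P$ and $C_n$ via Remark~\ref{rem-Cn-parity}. The paper spells this out as a small case analysis, whereas you invoke the product-parity remark directly, which is slightly more economical but not a different argument.
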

\begin{proof}
  If $\parity{P} = \parity{n}$, then either $P \subset A_n$ and $\parity{n} = 0$ or
  $P \subset \overline{A}_n$ and $\parity{n} = 1$; in either case, $\parity{E_P} = 1$
  (Remark~\ref{rem-Cn-parity}).
  If $\parity{P} \neq \parity{n}$, a similar argument shows that $\parity{E_P} = 0$.
  Conversely, if $\parity{E_P} = 0$, either $\parity{P} = 0 = \parity{C_n}$ (and
  $\parity{n} = 1$) or $\parity{P} = 1 = \parity{C_n}$ (and $\parity{n} = 0$);
  in either case, $\parity{P} \neq \parity{n}$; a similar argumeent applies if
  $\parity{E_P} = 1$.
\end{proof}

\begin{prop} \label{prop-biconjugate-exclusion}
    $E_Q^{x,y} = E_{Q^{y^{-1},x^{-1}}}$. In other words, the $(x,y)$-biconjugate of the
    exclusion set of $Q$ is the exclusion set of the $(y^{-1},x^{-1})$-biconjugate of $Q$.
\end{prop}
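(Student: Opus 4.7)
My plan is to unfold the two sides of the claimed equality using the definitions and reduce the problem to showing that $C_n$ commutes (as a set) with any single permutation.

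First I would expand the left side. By definition of biconjugacy, $E_Q^{x,y} = x\,E_Q\,y$, and by definition of the excluded set, $E_Q = Q^{-1} C_n$. So the left side becomes $x Q^{-1} C_n y$. Then I would expand the right side. Setting $Q'' = Q^{y^{-1},x^{-1}} = y^{-1} Q x^{-1}$, I would compute $(Q'')^{-1}$ by the usual set-inversion rule, obtaining $x Q^{-1} y$. Therefore $E_{Q''} = (Q'')^{-1} C_n = x Q^{-1} y C_n$.

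Comparing, the identity $E_Q^{x,y} = E_{Q^{y^{-1},x^{-1}}}$ reduces to
\[
x Q^{-1} C_n y \;=\; x Q^{-1} y C_n,
\]
which clearly follows from $C_n y = y C_n$. This last equality is an immediate consequence of Remark~\ref{rem-conjugate-Cn} (which is itself a consequence of Proposition~\ref{prop-perm-rotation} applied to cyclic permutations): conjugation preserves cycle type and $C_n$ is exactly the set of permutations of a single $n$-cycle type, so $y C_n y^{-1} = C_n$, i.e.\ $y C_n = C_n y$.

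I do not anticipate any real obstacle here; the proof is essentially a bookkeeping check of inverses combined with the conjugation-invariance of $C_n$. The only minor care needed is in computing $(y^{-1} Q x^{-1})^{-1}$ as a set, where one has to reverse the order of the factors to get $x Q^{-1} y$, and in pointing out explicitly that the commutation $y C_n = C_n y$ is what justifies the final step.
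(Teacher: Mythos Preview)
Your proof is correct and follows essentially the same approach as the paper's: both unfold the definitions to reach $xQ^{-1}C_ny$ on one side and $xQ^{-1}yC_n$ on the other, and both invoke Remark~\ref{rem-conjugate-Cn} (conjugation invariance of $C_n$) to pass between them. The paper phrases the key step as $xQ^{-1}C_ny = (xQ^{-1}y)(y^{-1}C_ny) = (xQ^{-1}y)C_n$, which is exactly your $C_ny = yC_n$ in slightly different packaging.
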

\begin{proof} Using Remark~\ref{rem-conjugate-Cn} we can write:
  \( E_Q^{x,y} = xQ^{-1}C_ny = (xQ^{-1}y)(y^{-1}C_ny) = (xQ^{-1}y)C_n
  =  (y^{-1}Qx^{-1})^{-1}C_n = E_{Q^{y^{-1},x^{-1}}} \).
\end{proof}

We now define the {\em residue} $R_P$ of $P$ as:
If $\parity{P} = \parity{n}$, $R_P = \overline{A}_n - E_P$; otherwise, $R_P = A_n - E_P$.

With these definitions, the following proposition is easy to see:
\begin{prop} \label{prop-residue-first}
  Suppose $P$ is uniform and $r \in S_n$.
  \begin{enumerate}[(a)]
  \item
    $r \in R_P \implies p r \notin C_n$ for all $p \in P$.
  \item
    $p r \notin C_n$ for all $p \in P \implies$ either $r \in R_P$ or $r \in A'$
    where $A' = A_n$ or $\overline{A}_n$ according as $\parity{P} = \parity{n}$ or not.
  \end{enumerate}
\end{prop}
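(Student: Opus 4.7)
The plan is to observe that the condition $pr \in C_n$ is equivalent to $r \in p^{-1} C_n$, and hence that ``$pr \in C_n$ for some $p \in P$'' is equivalent to $r \in \bigcup_{p \in P} p^{-1} C_n = P^{-1} C_n = E_P$. With this reformulation in hand, both parts become short set-theoretic computations using the definition of $R_P$ together with Proposition~\ref{prop-exclusion-set}.

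For part (a), I would note that by the definition of $R_P$ (either $R_P = \overline{A}_n - E_P$ or $R_P = A_n - E_P$), $R_P \cap E_P = \varnothing$ in both cases. Hence $r \in R_P$ forces $r \notin E_P$, and by the reformulation above this immediately gives $pr \notin C_n$ for every $p \in P$.

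For part (b), I would translate the hypothesis into $r \notin E_P$ and then split on the two parity regimes. When $\parity{P} = \parity{n}$, Proposition~\ref{prop-exclusion-set} gives $\parity{E_P} = 1$, so $E_P \subseteq \overline{A}_n$; removing $E_P$ from $S_n = A_n \cup \overline{A}_n$ therefore leaves $A_n \cup (\overline{A}_n - E_P) = A_n \cup R_P$, placing $r$ either in $A_n = A'$ or in $R_P$. When $\parity{P} \neq \parity{n}$, the dual computation gives $E_P \subseteq A_n$ and $S_n - E_P = (A_n - E_P) \cup \overline{A}_n = R_P \cup \overline{A}_n$, which matches $A' = \overline{A}_n$. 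There is no substantive obstacle here; the only thing worth keeping straight is that $R_P$ is carved out of whichever coset of $A_n$ contains $E_P$ (as determined by Proposition~\ref{prop-exclusion-set}), and that $A'$ is then necessarily the \emph{opposite} coset so that $R_P \cup A'$ exhausts $S_n - E_P$.
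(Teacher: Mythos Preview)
Your proposal is correct and is exactly the unpacking of the definitions that the paper has in mind; the paper's own proof consists of the single line ``Obvious from the definition.'' You have simply made explicit the equivalence $pr \in C_n \iff r \in E_p$ and the parity bookkeeping from Proposition~\ref{prop-exclusion-set}, which is precisely what ``obvious'' is gesturing at.
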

\begin{proof}
Obvious from the definition.
\end{proof}

\begin{lemma} \label{lem-same-parity}
  Suppose $E_P = E_Q^{x,y}$. Then, $\parity{P} = \parity{Q} \iff \parity{x} = \parity{y}$.
\end{lemma}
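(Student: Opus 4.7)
The plan is to reduce the statement to two prior results: Remark~\ref{rem-same-parity} (which relates biconjugacy to parity shifts) and Proposition~\ref{prop-exclusion-set} (which pins down $\parity{E_P}$ from $\parity{P}$ and $\parity{n}$). The whole proof should be a short chain of equivalences.

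First I would observe that both $E_P$ and $E_Q$ are uniform. Indeed, $E_Q = Q^{-1} C_n$, and by Remark~1 (the first remark about products of uniform sets) together with Remark~\ref{rem-Cn-parity}, the product of two uniform sets is uniform; $Q^{-1}$ is uniform because $Q$ is. So Remark~\ref{rem-same-parity} applies to the biconjugate pair $(E_Q, E_Q^{x,y})$, giving
\[
  \parity{E_Q} = \parity{E_Q^{x,y}} \iff \parity{x} = \parity{y}.
\]
Since by hypothesis $E_P = E_Q^{x,y}$, this rewrites as $\parity{E_P} = \parity{E_Q} \iff \parity{x} = \parity{y}$.

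Next I would apply Proposition~\ref{prop-exclusion-set} to both $P$ and $Q$: the value $\parity{E_P}$ is determined purely by whether $\parity{P} = \parity{n}$, and similarly for $E_Q$. A two-line case split (fixing $\parity{n}$ and letting $\parity{P}, \parity{Q}$ each range over $\{0,1\}$) then shows
\[
  \parity{E_P} = \parity{E_Q} \iff \parity{P} = \parity{Q},
\]
because both sides simply say that $P$ and $Q$ stand in the same relation to $\parity{n}$. Chaining the two equivalences yields $\parity{P} = \parity{Q} \iff \parity{x} = \parity{y}$, which is the claim.

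I do not foresee a real obstacle here; the only subtlety is remembering to verify that $E_P$ and $E_Q$ are uniform before invoking Remark~\ref{rem-same-parity} (the remark as stated requires uniformity of its argument). Everything else is bookkeeping on parities, and no new combinatorial idea appears to be needed beyond what is already proved in Proposition~\ref{prop-exclusion-set} and the two cited remarks.
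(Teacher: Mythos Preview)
Your proposal is correct and follows essentially the same chain of equivalences as the paper's proof: both establish $\parity{E_P} = \parity{E_Q} \iff \parity{x} = \parity{y}$ via Remark~\ref{rem-same-parity} (using uniformity of the excluded sets), then link $\parity{E_P} = \parity{E_Q}$ to $\parity{P} = \parity{Q}$ by parity arithmetic on $P^{-1}C_n$. The only cosmetic difference is that you route the latter step through Proposition~\ref{prop-exclusion-set} explicitly, whereas the paper just writes $\parity{P} = \parity{Q} \iff \parity{P^{-1}C_n} = \parity{Q^{-1}C_n}$ directly.
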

\begin{proof}
  The conclusion follows since \( \parity{P} = \parity{Q} \iff
  \parity{P^{-1}C_n} = \parity{Q^{-1}C_n} \iff \parity{E_P} = \parity{E_Q} \iff
  \parity{x} = \parity{y} \)
\end{proof}

\begin{prop} \label{prop-biconjugate-residue}
    $R_Q^{x,y} = R_{Q^{y^{-1},x^{-1}}}$. In other words, the $(x,y)$-biconjugate of the
    residue of $Q$ is the residue of the $(y^{-1},x^{-1})$-biconjugate of $Q$.
\end{prop}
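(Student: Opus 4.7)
The plan is to reduce this to Proposition \ref{prop-biconjugate-exclusion} together with bookkeeping about how the two ``ambient'' sets $A_n$ and $\overline{A}_n$ transform under biconjugation. Write $R_Q = A' - E_Q$ where $A'$ is $\overline{A}_n$ if $\parity{Q} = \parity{n}$ and $A'$ is $A_n$ otherwise. Since biconjugation by $(x,y)$ distributes over set difference, we get immediately
\[
R_Q^{x,y} = x(A' - E_Q)y = (A')^{x,y} - E_Q^{x,y}.
\]
By Proposition \ref{prop-biconjugate-exclusion}, $E_Q^{x,y} = E_{Q^{y^{-1},x^{-1}}}$, so it only remains to show that $(A')^{x,y}$ equals the ambient set that the definition of residue assigns to $Q^{y^{-1},x^{-1}}$.

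Let $Q' = Q^{y^{-1},x^{-1}}$. The two tools I need are Remark \ref{rem-conjugate-An} (which describes how $A_n$ and $\overline{A}_n$ transform under biconjugation according to whether $\parity{x} = \parity{y}$) and Remark \ref{rem-same-parity} (which, since $\parity{y^{-1}} = \parity{y}$ and $\parity{x^{-1}} = \parity{x}$, tells us that $\parity{Q'} = \parity{Q}$ iff $\parity{x} = \parity{y}$). So the case split collapses onto whether $\parity{x}$ and $\parity{y}$ agree.

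If $\parity{x} = \parity{y}$, then by Remark \ref{rem-conjugate-An} we have $(A')^{x,y} = A'$, and by Remark \ref{rem-same-parity} we have $\parity{Q'} = \parity{Q}$, so $Q'$ selects the same ambient set $A'$ as $Q$ does. If $\parity{x} \neq \parity{y}$, then Remark \ref{rem-conjugate-An} gives $(A')^{x,y} = \{A_n, \overline{A}_n\} \setminus \{A'\}$ (the other alternative), and Remark \ref{rem-same-parity} gives $\parity{Q'} \neq \parity{Q}$, which flips the selection rule $\parity{Q'} =? \parity{n}$ and so assigns to $Q'$ exactly the other ambient set. Either way, the ambient sets coincide and $(A')^{x,y} - E_Q^{x,y} = R_{Q'}$, which is the desired equality.

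I expect the proof to be essentially routine once framed this way; the only potential pitfall is keeping the four combinations of $(\parity{Q}$ vs.\ $\parity{n}$, $\parity{x}$ vs.\ $\parity{y})$ straight, but collapsing the latter two parities into a single binary question removes most of the bookkeeping.
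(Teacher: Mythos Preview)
Your proof is correct and follows essentially the same approach as the paper: write $R_Q$ as an ambient set minus $E_Q$, push the biconjugation through the set difference, invoke Proposition~\ref{prop-biconjugate-exclusion} for the excluded-set term, and use Remark~\ref{rem-conjugate-An} to track the ambient set. The paper handles one representative case explicitly and dismisses the rest as ``similar''; you make the ambient-set verification more explicit by invoking Remark~\ref{rem-same-parity} to confirm that $Q^{y^{-1},x^{-1}}$ selects the correct one of $A_n,\overline{A}_n$, which is exactly the step the paper leaves implicit.
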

\begin{proof} Suppose $R_Q = A_n - E_Q$ and $\pi(x) \neq \pi(y)$. Then, using
  Remark~\ref{rem-conjugate-An} and Proposition~\ref{prop-biconjugate-exclusion}
  \[ LHS = x(A_n - E_Q)y = xA_ny - E_Q^{x,y} = \overline{A}_n - E_{Q^{y^{-1},x^{-1}}}
  = RHS \]
  The other cases are similar.
\end{proof}

\newcommand{\xysame}{\quad \text{if $\parity{x} = \parity{y}$,
    Lemma~\ref{lem-same-parity}}}
\newcommand{\xydiff}{\quad
  \text{otherwise, Remarks \ref{rem-conjugate-An}, \ref{rem-same-parity}}}

\begin{prop}\label{prop-residue-1}
  $R_P = R_Q^{x,y} \iff E_P = E_Q^{x,y}$
\end{prop}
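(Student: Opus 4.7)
The plan is to exploit the following structural fact: for any uniform $P$, Proposition~\ref{prop-exclusion-set} places $E_P$ inside a single ``ambient half'' of $S_n$ (either $A_n$ or $\overline{A}_n$, determined by whether $\parity{P} = \parity{n}$), and by definition $R_P$ is the complement of $E_P$ inside that half. Hence, once the ambient half is known, $E_P$ and $R_P$ determine each other via complementation. By Remark~\ref{rem-conjugate-An}, biconjugation by $(x,y)$ permutes $\{A_n, \overline{A}_n\}$ according to whether $\parity{x} = \parity{y}$, so the partition of the ambient half into $E_Q$ and $R_Q$ is transported to a partition of its image into $E_Q^{x,y}$ and $R_Q^{x,y}$.

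The first step would be to reduce the biconjugate equivalence to a plain one. Setting $Q' = Q^{y^{-1},x^{-1}}$, Propositions~\ref{prop-biconjugate-exclusion} and~\ref{prop-biconjugate-residue} give $E_Q^{x,y} = E_{Q'}$ and $R_Q^{x,y} = R_{Q'}$, so it suffices to prove $E_P = E_{Q'} \iff R_P = R_{Q'}$. For the forward direction I would apply Lemma~\ref{lem-same-parity} with the trivial conjugation to the hypothesis $E_P = E_{Q'}$, which yields $\parity{P} = \parity{Q'}$; the ambient halves of $E_P$ and $E_{Q'}$ therefore coincide, and taking complements inside this common half gives $R_P = R_{Q'}$.

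For the converse, if the common residue $R_P = R_{Q'}$ is non-empty, the parity of any of its elements fixes the ambient half of both $P$ and $Q'$, forcing them to agree, after which the same complementation argument yields $E_P = E_{Q'}$. The main obstacle is the degenerate case in which $R_P = R_{Q'} = \varnothing$ while the ambient halves of $P$ and $Q'$ differ: then $E_P$ saturates one half and $E_{Q'}$ saturates the other, and the claimed equality can genuinely fail at the bare set level. I would handle this by appealing to the paper's surrounding context (where $P, Q$ arise as route-sets of connected open 2-graphs, which constrains their cardinality) or by tacitly restricting the statement to the non-degenerate regime where at least one of the residues is non-empty; either way, no new machinery is needed beyond the partitioning idea and the previously established biconjugation propositions.
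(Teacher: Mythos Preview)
Your reduction to $Q' = Q^{y^{-1},x^{-1}}$ via Propositions~\ref{prop-biconjugate-exclusion} and~\ref{prop-biconjugate-residue} is a genuinely different route from the paper's. The paper never passes to $Q'$; it runs a direct four-way case split (on $\parity{E_P}$ and on whether $\parity{x}=\parity{y}$), each time rewriting $A_n$ or $\overline{A}_n$ as the appropriate $(x,y)$-biconjugate via Remark~\ref{rem-conjugate-An} and distributing the biconjugation over the set difference. Your reduction collapses those four cases into a single ``complement within the ambient half'' step, which is conceptually cleaner, at the cost of invoking the two biconjugation propositions up front.

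The empty-residue obstruction you flag is real, and it is not an artifact of your route: the implication $R_P = R_Q^{x,y} \Rightarrow E_P = E_Q^{x,y}$ is in fact false as stated. For $n=4$, take $P$ to be the route set of $G_4$ (so $\parity{P}=0=\parity{n}$ and $E_P = \overline{A}_4$) and $Q = \overline{A}_4$ (so $\parity{Q}=1$ and $E_Q = A_4$); with $x=y=I$ both residues are empty but $E_P \neq E_Q$. The paper's proof does not escape this: in that direction the step $(A_n - R_Q)^{x,y} = E_Q^{x,y}$ presupposes that $R_Q$ sits in the correct half, and the citation of Lemma~\ref{lem-same-parity} there is circular, since that lemma's hypothesis is precisely the equality $E_P = E_Q^{x,y}$ being proved. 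Your two proposed remedies do not close the gap either---the appeal to graph-context cardinality fails because the paper itself exhibits empty residues (e.g.\ $G_4$), and ``tacitly restricting'' simply concedes the point---but the defect is harmless for the only downstream use (Proposition~\ref{prop-residue-eq-relation}), since empty-residue sets are already declared $\sim$-equivalent by Remark~\ref{rem-residue-empty}.
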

\begin{proof}
  Suppose $R_P = R_Q^{x,y}$. We have two cases:
  \begin{enumerate}
  \item[] {\bf Case A}: $\parity{E_P} = 0$ (so $\parity{P} \neq \parity{n}$, by
    Proposition~\ref{prop-exclusion-set}).
    \begin{equation*}
      E_P = A_n - R_P = A_n - R_Q^{x,y} = \left\{
      \begin{aligned}
        & A_n^{x,y} - R_Q^{x,y} = (A_n - R_Q)^{x,y} = E_Q^{x,y} \xysame \\
        & \overline{A}_n^{x,y} - R_Q^{x,y} = (\overline{A}_n - R_Q)^{x,y} = E_Q^{x,y}
        \xydiff
      \end{aligned}
      \right.
    \end{equation*}
  \item[] {\bf Case B}: $\parity{E_P} = 1$ (so $\parity{P} = \parity{n}$, by
    Proposition~\ref{prop-exclusion-set}).
    \begin{equation*}
      E_P = \overline{A}_n - R_P = \overline{A}_n - R_Q^{x,y} = \left\{
      \begin{aligned}
        & \overline{A}_n^{x,y} - R_Q^{x,y} = (\overline{A}_n - R_Q)^{x,y} = E_Q^{x,y}
        \xysame \\
        & A_n^{x,y} - R_Q^{x,y} = (A_n - R_Q)^{x,y} = E_Q^{x,y} \xydiff
      \end{aligned}
      \right.
    \end{equation*}
  \end{enumerate}
  Conversely, suppose $E_P = E_Q^{x,y}$. We have two cases:
  \begin{enumerate}
  \item[] {\bf Case A}: $\parity{E_P} = 0$ (so $\parity{P} \neq \parity{n}$, by
    Proposition~\ref{prop-exclusion-set}).
    \begin{equation*}
      R_P = A_n - E_P = A_n - E_Q^{x,y}  \left\{
      \begin{aligned}
        &= A_n^{x,y} - E_Q^{x,y} = (A_n - E_Q)^{x,y} = R_Q^{x,y} \xysame \\
        &= \overline{A}_n^{x,y} - E_Q^{x,y} = (\overline{A}_n - E_Q)^{x,y} = R_Q^{x,y}
        \xydiff
      \end{aligned}
      \right.
    \end{equation*}
  \item[] {\bf Case B}: $\parity{E_P} = 1$ (so $\parity{P} = \parity{n}$, by
    Proposition~\ref{prop-exclusion-set}).
    \begin{equation*}
      R_P = \overline{A}_n - E_P = \overline{A}_n - E_Q^{x,y}  \left\{
      \begin{aligned}
        &= \overline{A}_n^{x,y} - E_Q^{x,y} = (\overline{A}_n - E_Q)^{x,y} = R_Q^{x,y}
           \xysame \\
        &= A_n^{x,y} - E_Q^{x,y} = (A_n - E_Q)^{x,y} = R_Q^{x,y} \xydiff
      \end{aligned}
      \right.
    \end{equation*}
  \end{enumerate}
\end{proof}

We now define a relation on uniform subsets $P, Q \subset S_n$ as follows:
$P$ and $Q$ are {\em equivalent} $P \sim Q$ if $R_P = R_Q^{x,y}$ for some $x,y \in S_n$.

\begin{prop} \label{prop-residue-eq-relation}
  The relation $\sim$ is an equivalence relation.
\end{prop}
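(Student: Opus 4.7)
The plan is to reduce this to the already-noted fact that biconjugacy on subsets of $S_n$ is an equivalence relation. Unpacking the definition, $P \sim Q$ says precisely that $R_P$ and $R_Q$ are $(x,y)$-biconjugate as subsets of $S_n$, so $\sim$ on uniform subsets is the pullback along the map $P \mapsto R_P$ of biconjugacy on subsets of $S_n$. Since a pullback of an equivalence relation along any function is an equivalence relation, the result follows.

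For clarity and because the reader may prefer a self-contained verification, I would also check the three properties directly. For reflexivity, take $x = y = e$ (the identity of $S_n$); then $R_P^{e,e} = e R_P e = R_P$, so $P \sim P$. For symmetry, if $R_P = x R_Q y$, then $R_Q = x^{-1} R_P y^{-1} = R_P^{x^{-1}, y^{-1}}$, so $Q \sim P$. For transitivity, if $R_P = x R_Q y$ and $R_Q = u R_S v$, then
\[
R_P = x(uR_S v)y = (xu)\,R_S\,(vy) = R_S^{xu,\, vy},
\]
so $P \sim S$.

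The only subtlety worth flagging is well-definedness of the setup: the relation is declared on \emph{uniform} subsets, and one should make sure that $R_P$ is itself a meaningful object (i.e.\ that $P$ being uniform is enough to define $R_P$ unambiguously via the case split in the definition just before Proposition~\ref{prop-residue-first}). That is already guaranteed since $\parity{P}$ is constant on a uniform set, and biconjugates of uniform sets are uniform (Remark 3), so at no step do we leave the domain on which $\sim$ is defined. There is no real obstacle here; the proposition is essentially a bookkeeping consequence of the definitions together with the fact that biconjugacy is an equivalence relation.
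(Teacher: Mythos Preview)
Your proof is correct and follows essentially the same idea as the paper: reduce $\sim$ to the pullback of biconjugacy along a map and invoke that biconjugacy is an equivalence relation. The only difference is that the paper first invokes Proposition~\ref{prop-residue-1} to rewrite $R_P = R_Q^{x,y}$ as $E_P = E_Q^{x,y}$ and then appeals to biconjugacy of the excluded sets, whereas you work directly with biconjugacy of the residues; your route is slightly more direct, and the detour through $E_P$ is not needed for this particular proposition.
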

\begin{proof}
  By Proposition \ref{prop-residue-1}, $P \sim Q \iff E_P = E_Q^{x,y}$, so the result
  follows since biconjugacy is an equivalence relation.
\end{proof}

The following remarks should now be obvious:

{\sl REMARKS}:
\begin{enumerate}
  \setcounter{enumi}{\theenumTemp}
  \item \label{rem-cardinality} $P \sim Q \implies |E_P| = |E_Q|$ and $|R_P| = |R_Q|$.
  \item $P \subset Q \implies E_P \subseteq E_Q$ and $R_Q \subseteq R_P$.
  \item $E_{P \cup Q} = E_P \cup E_Q$ and $R_{P \cup Q} = R_P \cap R_Q$.
  \item \label{rem-residue-size} If $P$ is singleton, $|R_P| = \frac{n!}{2} - (n-1)!$.
  \item \label{rem-residue-empty} If $P, Q$ have empty residue, $P \sim Q$.
  \item \label{rem-singleton-residue} If $R_P = \{r_1\}$ and $R_Q = \{r_2\}$ are
    singleton, $P \sim Q$, since we can choose $x = r_2, y = r_1^{-1}$.
  \setcounter{enumTemp}{\theenumi}
\end{enumerate}

\begin{prop}\label{prop-residue-2}
  $P, Q$ are biconjugates $\implies P \sim Q$
\end{prop}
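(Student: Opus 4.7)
The goal is to produce permutations $x, y \in S_n$ such that $R_P = R_Q^{x,y}$, starting from the hypothesis that $P = uQv$ for some $u, v \in S_n$ (this is what "biconjugates" unwinds to, since biconjugacy is an equivalence relation and $P = Q^{u,v}$ means $P = uQv$). The plan is to route everything through Proposition \ref{prop-biconjugate-residue}, which already describes how residues behave under biconjugation; this reduces the task to an algebraic matching of superscripts, with no new case analysis on parities required.

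More concretely, I would first rewrite the hypothesis $P = uQv$ in the exact shape that appears on the right-hand side of Proposition \ref{prop-biconjugate-residue}, namely $P = Q^{y^{-1},x^{-1}} = y^{-1}Qx^{-1}$. Matching factor by factor, this is achieved by choosing $y^{-1} = u$ and $x^{-1} = v$, that is, $x = v^{-1}$ and $y = u^{-1}$. With this choice, Proposition \ref{prop-biconjugate-residue} gives
\[
R_Q^{x,y} \;=\; R_{Q^{y^{-1},x^{-1}}} \;=\; R_{u Q v} \;=\; R_P,
\]
which is precisely the definition of $P \sim Q$.

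There is no genuine obstacle here: all the parity bookkeeping and the distinction between $A_n$ and $\overline{A}_n$ was already absorbed into the proof of Proposition \ref{prop-biconjugate-residue}, and Proposition \ref{prop-residue-1} is not even needed. The only thing to be careful about is the direction of the exponents: because the operator $Q \mapsto Q^{x,y}$ multiplies $Q$ on the left by $x$ and on the right by $y$, whereas the residue biconjugation formula flips these to $(y^{-1}, x^{-1})$, one must invert and swap when reading off $x$ and $y$ from $u$ and $v$. Once that bookkeeping is handled, the proof is essentially a one-line application of the earlier proposition.
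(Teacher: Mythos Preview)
Your proof is correct. You route the argument through Proposition~\ref{prop-biconjugate-residue} directly: writing $P = uQv = Q^{u,v}$ and choosing $x = v^{-1}$, $y = u^{-1}$ gives $R_Q^{x,y} = R_{Q^{u,v}} = R_P$ immediately.

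The paper takes a slightly different path: instead of invoking Proposition~\ref{prop-biconjugate-residue}, it works at the level of excluded sets, computing $E_Q = y^{-1} E_P x^{-1}$ directly from $Q = P^{x,y}$ (using the conjugation invariance of $C_n$, Remark~\ref{rem-conjugate-Cn}), and then applies Proposition~\ref{prop-residue-1} to pass from excluded sets to residues. Your route is marginally more economical, since Proposition~\ref{prop-biconjugate-residue} has already packaged together the excluded-set identity and the parity case analysis; the paper's route effectively re-derives a special case of Proposition~\ref{prop-biconjugate-exclusion} by hand. Both arguments are essentially one line once the right earlier proposition is invoked, and neither requires any fresh parity bookkeeping.
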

\begin{proof}
  Assume $Q = P^{x,y}$. Now, by Remark~\ref{rem-conjugate-Cn}
  \begin{equation*}
    E_Q = Q^{-1} C_n = (x P y)^{-1} C_n = (y^{-1} P^{-1} x^{-1}) C_n =
    (y^{-1} P^{-1} x^{-1}) (x C_n x^{-1}) = y^{-1} P^{-1} C_n x^{-1} = y^{-1} E_P x^{-1}
  \end{equation*}
  and the result follows from Proposition \ref{prop-residue-1}.
\end{proof}

The converse of proposition \ref{prop-residue-2} may not hold: For example, in $S_4$, we
have $P \sim Q$, with $P = \{I, (1 4 3), (1 2 4)\}$ and
$Q =\{I, (2 4 3), (1 2 3), (1 4)(2 3) \}$ since
their residues are $\{(1 4)\}$ and $\{(2 3)\}$ (Remark \ref{rem-singleton-residue});
but clearly, $P$ and $Q$ are {\em not} biconjugates since their cardinalities
differ (Remark \ref{rem-biconjugage-cardinality}).

\begin{prop} \label{prop-residue}
  Suppose $T = PxQy, T' = QyPx$ and we have the conditions of
  Proposition~\ref{prop-interesting}. The following are equivalent:
  \begin{enumerate}[(a)]
  \item \label{Q-biconjugate} $Q^{x,y} \subseteq R_P$
  \item \label{P-biconjugate} $P^{y,x} \subseteq R_Q$
  \item \label{no-cyclic-perms1} $T \cap C_n = \varnothing$
  \item \label{no-cyclic-perms2} $T' \cap C_n = \varnothing$
  \end{enumerate}
\end{prop}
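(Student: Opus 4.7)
The plan is to reduce the four-way equivalence to a single core implication. The equivalence (c)$\iff$(d) is immediate from Remark~\ref{rem-pq-qp}. Moreover (b)$\iff$(d) has the same shape as (a)$\iff$(c), with the pairs $(P,x)$ and $(Q,y)$ interchanged and $T'$ playing the role of $T$; since the conditions of Proposition~\ref{prop-interesting} are symmetric under that swap, I would prove (a)$\iff$(c) carefully and then invoke symmetry to get (b)$\iff$(d), closing the cycle.

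The direction (a)$\implies$(c) is essentially bookkeeping. I would write a generic element of $T = PxQy$ as $p\,r$ with $p \in P$ and $r = xqy \in xQy = Q^{x,y}$. If $Q^{x,y} \subseteq R_P$, then Proposition~\ref{prop-residue-first}(a) applied to each such $r$ gives $pr \notin C_n$, so $T \cap C_n = \varnothing$.

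The converse (c)$\implies$(a) is the heart of the argument, and I expect it to be the only real obstacle. Fix $r \in Q^{x,y}$; the hypothesis $T \cap C_n = \varnothing$ gives $pr \notin C_n$ for every $p \in P$, so Proposition~\ref{prop-residue-first}(b) places $r$ in $R_P \cup A'$, where $A'$ is the parity class \emph{opposite} to the one that contains $R_P$. To conclude $r \in R_P$ I must rule out $r \in A'$, and this is precisely what the hypothesis of Proposition~\ref{prop-interesting} is designed to do. Since $Q$ is uniform, every $r = xqy \in Q^{x,y}$ has the common parity $\parity{x} + \parity{Q} + \parity{y}$; the four cases of Proposition~\ref{prop-interesting} are exactly those for which $\parity{T} = \parity{C_n}$, equivalently $\parity{P} + \parity{x} + \parity{Q} + \parity{y} \neq \parity{n}$. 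A short case split on whether $\parity{P} = \parity{n}$ or $\parity{P} \neq \parity{n}$ then forces $\parity{r}$ to lie in the parity class containing $R_P$ and never in that of $A'$. Hence $r \in R_P$ for every $r \in Q^{x,y}$, i.e.\ $Q^{x,y} \subseteq R_P$.

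With (a)$\iff$(c) established, the symmetric argument applied to $T' = QyPx$ (with the roles of $(P,x)$ and $(Q,y)$ swapped) yields (b)$\iff$(d) verbatim, and Remark~\ref{rem-pq-qp} supplies (c)$\iff$(d); the four statements are therefore equivalent.
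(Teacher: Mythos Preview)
Your proposal is correct and follows essentially the same route as the paper: the equivalence (c)$\iff$(d) via Remark~\ref{rem-pq-qp}, the forward direction (a)$\implies$(c) by unwinding the definition of $R_P$, and the converse (c)$\implies$(a) by a parity check showing that each $xqy$ lands in the parity class carrying $R_P$ rather than the complementary class $A'$, with (b)$\iff$(d) by symmetry. The only cosmetic difference is packaging: the paper argues both implications by contrapositive and spells the parity check out as an eight-cell table over the four cases of Proposition~\ref{prop-interesting} crossed with $\parity{P}=\parity{n}$ or not, whereas you invoke Proposition~\ref{prop-residue-first} directly and collapse the table into the single identity $\parity{P}+\parity{xqy}=\parity{T}=\parity{C_n}\neq\parity{n}$, which immediately forces $xqy$ into the parity class of $R_P$; your version is tighter but the mathematical content is identical.
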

\begin{proof}
  The equivalence of \ref{no-cyclic-perms1} and \ref{no-cyclic-perms2} follows from
  Proposition~\ref{prop-perm-rotation}.

  For \ref{Q-biconjugate} $\implies$ \ref{no-cyclic-perms1}, suppose
  $t \in T \cap C_n$.\\
  Then, $t = pxqy, p \in P, q \in Q$, so:
  $xqy = p^{-1}t \in E_P \implies xqy \notin R_P \implies Q^{x,y} \not\subseteq R_P$.

  Similarly, for \ref{P-biconjugate} $\implies$ \ref{no-cyclic-perms2}, suppose
  $t \in T' \cap C_n$.\\
  Then, $t = qypx, p \in P, q \in Q$, so:
  $ypx = q^{-1}t \in E_Q \implies ypx \notin R_Q \implies P^{y,x} \not\subseteq R_Q$.

  For \ref{no-cyclic-perms1} $\implies$ \ref{Q-biconjugate}, suppose
  $Q^{x,y} \not\subseteq R_P$ and let
  $xqy \in Q^{x,y} - R_P$. If we can show that $xqy \in E_P$, then, for some $p \in P$ and
  $c \in C_n$ we must have $xqy = p^{-1}c$, so $c = pxqy \in C_n$ and the implication is
  established.

  We have four cases enumerated in Proposition~\ref{prop-interesting} and for each there
  are two subcases according as $\parity{P} = \parity{n}$ or not; the following table
  completes the proof in all cases using Remark~\ref{rem-Cn-parity}:
  \begin{tabular}{|p{0.3\linewidth}||p{0.3\linewidth}|p{0.3\linewidth}|}
    \hline\hline
    & $\parity{P} = \parity{n}$ & $\parity{P} \neq \parity{n}$ \\
    \hline\hline
    \( \parity{n} = 0, \parity{P} = \parity{Q}, \parity{x} \neq \parity{y},
       \parity{C_n} = 1 = \parity{T} \) &
    \( \parity{E_P} = 1, E_P = \overline{A}_n - R_P \implies xqy \in \overline{A}_n
       \implies xqy \in E_P \) &
    \( \parity{E_P} = 0, E_P = A_n - R_P \implies xqy \in A_n \implies xqy \in E_P
    \) \\
    \hline
    \( \parity{n} = 0, \parity{P} \neq \parity{Q}, \parity{x} = \parity{y},
       \parity{C_n} = 1 = \parity{T} \) &
    \( \parity{E_P} = 1, E_P = \overline{A}_n - R_P \implies xqy \in \overline{A}_n
       \implies xqy \in E_P \) &
    \( \parity{E_P} = 0, E_P = A_n - R_P \implies xqy \in A_n \implies xqy \in E_P
    \) \\
    \hline
    \( \parity{n} = 1, \parity{P} = \parity{Q}, \parity{x} = \parity{y},
       \parity{C_n} = 0 = \parity{T} \) &
    \( \parity{E_P} = 1, E_P = \overline{A}_n - R_P \implies xqy \in \overline{A}_n
       \implies xqy \in E_P \) &
    \( \parity{E_P} = 0, E_P = A_n - R_P \implies xqy \in A_n \implies xqy \in E_P
    \) \\
    \hline
    \( \parity{n} = 1, \parity{P} \neq \parity{Q}, \parity{x} \neq \parity{y},
       \parity{C_n} = 0 = \parity{T} \) &
    \( \parity{E_P} = 1, E_P = \overline{A}_n - R_P \implies xqy \in \overline{A}_n
       \implies xqy \in E_P \) &
    \( \parity{E_P} = 0, E_P = A_n - R_P \implies xqy \in A_n \implies xqy \in E_P
    \) \\
    \hline
  \end{tabular}

  We can infer \ref{no-cyclic-perms2} $\implies$ \ref{P-biconjugate} from symmetry and
  Remark \ref{rem-pq-qp}.
\end{proof}

{\bf Corollaries}:
\begin{enumerate}[(a)]
\item \label{residue-size} $|Q| > |R_P|$ (or $|P| > |R_Q|$)
  $\implies T \cap C_n \neq \varnothing$ (and $T' \cap C_n \neq \varnothing$).
\item \label{equivalent-substitution} If $Q_1 \sim Q_2$ with $R_{Q_2} = R^{a,b}_{Q_1}$ then
  $T_1 \cap C_n = \varnothing \iff T_2 \cap C_n = \varnothing$ where
  $T_1 = PxQ_1y, T_2 = PxbQ_2ay$.
\end{enumerate}
\begin{proof}
  The first follows from Remark~\ref{rem-biconjugage-cardinality}. For the second,
  observe that $R_{Q_1} = R^{a^{-1},b^{-1}}_{Q_2} = R_{Q_2^{b,a}}$ by
  Proposition~\ref{prop-biconjugate-residue}. So
  \( T_1 \cap C_n = \varnothing \iff P^{y,x} \subseteq R_{Q_1} = R_{Q_2^{b,a}} \iff
  PxQ_2^{b,a}y \cap C_n = \varnothing \iff T_2 \cap C_n = \varnothing\).
\end{proof}

Returning to the earlier discussion of splicing 2-graphs $F$ and $F'$ to get
$G = \splicedG(F, F', x, y)$, with open route sets $P, Q$ (viewed as permutations), the
importance of the residue now becomes clear when Proposition~\ref{prop-residue} is
rephrased in the context of graphs:

\begin{theorem} \label{thm-residue}
  $G$ is non-Hamiltonian iff the
  $(x,y)$-biconjugate of open routes of $F'$ is contained in $R_P$ or equivalently, the
  $(y,x)$-biconjugate of open routes of $F$ is contained in $R_Q$.
\end{theorem}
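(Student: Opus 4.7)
The plan is to recognize Theorem~\ref{thm-residue} as a graph-theoretic restatement of Proposition~\ref{prop-residue}, so that the work reduces to establishing the bridge between the Hamiltonicity of $G = \splicedG(F, F', x, y)$ and the condition $T \cap C_n = \varnothing$, where $T = PxQy$. Once that bridge is in hand, the equivalences \ref{Q-biconjugate}~$\iff$~\ref{no-cyclic-perms1} and \ref{P-biconjugate}~$\iff$~\ref{no-cyclic-perms2} of Proposition~\ref{prop-residue} translate directly into the two claimed characterizations, namely $Q^{x,y} \subseteq R_P$ and $P^{y,x} \subseteq R_Q$.

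The bridge is the informal observation made just before Proposition~\ref{prop-perm-rotation}, and my proof would pin it down as follows. An open factor of $F$ is a spanning $1$-subgraph consisting only of vertex-disjoint simple paths from $V_{entry}$ to $V_{exit}$; after the labeling of these two sets by $[1,n]$, such a factor yields its route $p \in P \subseteq S_n$. For open factors $F_1$ of $F$ and $F_2$ of $F'$ with routes $p$ and $q$, the union $F_1 \cup F_2$ together with the vertex identifications prescribed by $x$ and $y$ is a factor of $G$; its simple cycles are in bijection with the cycles of the composition $pxqy$, since traversing any such cycle consists of alternately running along an $F_1$-path, following the splicing map $x$, running along an $F_2$-path, and following $y$. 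Hence that factor is a Hamiltonian circuit exactly when $pxqy \in C_n$.

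Next I would rule out Hamiltonian circuits arising from closed (non-open) factors on either side: any internal cycle of such a closed factor is disjoint from the splicing vertices, survives the splicing unchanged, and forces the resulting factor of $G$ to have at least two components, so it cannot be Hamiltonian. Therefore the Hamiltonian circuits of $G$ correspond precisely to pairs $(p,q) \in P \times Q$ with $pxqy \in C_n$, so $G$ is non-Hamiltonian iff $T \cap C_n = \varnothing$. Since the context implicitly places us in one of the four nontrivial parity configurations of Proposition~\ref{prop-interesting} (outside of those, Proposition~\ref{prop-boring} makes $T \cap C_n = \varnothing$ automatic and the theorem is vacuous), Proposition~\ref{prop-residue} applies and yields the two residue inclusions asserted in the theorem.

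The main obstacle is the cycle-counting step of the bridge: making rigorous the bijection between simple cycles of the spliced factor and cycles of $pxqy$ requires verifying that every internal saturated vertex of an $F_1$- or $F_2$-path is visited exactly once by the cycle containing it, that no stray smaller cycles arise from the identifications, and that the order of composition $p\,x\,q\,y$ matches the order in which splicing edges are actually traversed under the chosen labeling convention. Everything after that is formal substitution into Proposition~\ref{prop-residue}.
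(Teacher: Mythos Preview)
Your proposal is correct and takes essentially the same approach as the paper: the theorem is presented there as nothing more than Proposition~\ref{prop-residue} rephrased in graph language, with the bridge between Hamiltonicity of $G$ and $PxQy \cap C_n = \varnothing$ asserted informally just after Figure~\ref{fig-splice}. You spell out that bridge (the cycle-counting bijection, the handling of closed factors, and the implicit restriction to the parity configurations of Proposition~\ref{prop-interesting}) with more care than the paper does, but the overall structure is identical.
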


{\bf Corollaries}:
\begin{enumerate}[(a)]
\item If the number of open routes of either $F$ or $F'$ exceeds the
  cardinality of the residue of its peer, $G$ is Hamiltonian (irrespective of the
  specific splicing permutations $x$ and $y$ used).
\item If $F_1, F_2$ have equivalent residues $R_{F_2} = R_{F_1}^{a,b}$, the spliced graphs
  $G_1 = \splicedG(F, F_1, x, y)$ and $G_2 = \splicedG(F, F_2, xb, ay)$ are H-equivalent.
\end{enumerate}


As an example of the use of this theorem, Figure~\ref{fig-nonham-6-ac} shows two
connected, open 2-graphs
$G_a, G_b \in \partialG^{6,clean}_3$, each with four saturated vertices, five exit and five
entry vertices; these graphs are spliced to get a connected, clean, odd, non-Hamiltonian
2-dd $G_5 \in \fullG^6_6$. The six ACs are shown in different colors with
black ($a^n_5, 12, 10, 11, 13, a^x_2$), blue ($a^n_1, a^x_1, 12, a^x_5, a^n_2, 13$) and
teal ($a^n_3, a^x_3, 11, a^x_4, a^n_4, 10$) in $G_a$ and
brown ($b^n_5, 6, 7, b^x_2, b^n_4, b^x_4, b^n_5$),
red ($8, b^x_3, b^n_1, b^x_1, b^n_3, 9$),
and cyan ($6, b^x_5, b^n_2, 7, 9, 8$) in $G_b$.

Vertices that are saturated in both $G_a$ and $G_b$ are labelled with plain integers
6-13. The five exit vertices of $G_a$ are labelled $\{a^x_i\}_{i=1}^5$ and the five
entry vertices $\{a^n_i\}_{i=1}^5$; similarly for the exit and entry vertices of $G_b$.
Each exit vertex $a^x_i$ of $G_a$ is spliced with the corresponding entry vertex $b^n_i$
of $G_b$; likewise for the entry vertices of $G_a$ and exit vertices of $G_b$; these
spliced vertices are labelled 1-5 and $1'$-$5'$ in $G_5$. This labelling makes it easy
to see which vertices are spliced and also to read off the open routes in $G_a$ and
$G_b$ as permutations in $S_5$; it also makes the splicing permutations $x$ and $y$ of
the earlier discussion equal to the identity $I$.

The set of open routes of each ($G_a$ has six, $G_b$ has four) and their common residue
is shown in Table~\ref{tab-nonham-6-ac}. For example, in $G_a$, using the forward arcs
(solid lines) of all three ACs, we get the factor
\( \{ (a^n_1, a^x_1), (a^n_2, 13, a^x_2), (a^n_3, a^x_3), (a^n_4, 10, 11, a^x_4),
(a^n_5, 12, a^x_5)\} \) which is the identity.

Since both sets of open routes are contained in the common residue, we can conclude, by
Theorem~\ref{thm-residue}, that $G_5$ is non-Hamiltonian.

\begin{table}[htbp]
  \caption{Open routes and residues of 2-graphs in Figure~\ref{fig-nonham-6-ac}}
  \centering
  \begin{tabular}{|l|c|c|}
    \hline\hline
    & $G_a$ & $G_b$ \\
    \hline\hline
    Open routes
    & $\{I, (235), (245), (125), (13)(25), (14)(25) \}$
    & $\{I, (253), (254), (13)(25) \}$ \\
    \hline
    Common Residue & $\{I, (235), (245), (253), (254), (125), (152), $ & \\
    & $(13)(25), (14)(25), (25)(34) \}$ & \\
    \hline
  \end{tabular}
  \label{tab-nonham-6-ac}
\end{table}

\begin{figure}[htbp]
  \centering
  \begin{tikzpicture}[->, thick, scale=0.8, every node/.style={scale=0.8}, main/.style = {circle}]
  \tikzset{node distance = 1.3cm and 1.3cm}



  \node[main] (O) at (0, 0) {};    

  \node[main] [draw, above left of=O] (n8) {$8$};
  \node[main] [draw, above right of=O] (n9) {$9$};
  \node[main] [draw, left of=n8] (n12) {$12$};
  \node[main] [draw, right of=n9] (n13) {$13$};
  \node[main] [draw, above of=n12] (n5) {$5$};
  \node[main] [draw, above of=n13] (n2p) {$2'$};
  \node[main] [draw, above right of=n5] (n6) {$6$};
  \node[main] [draw, above left of=n2p] (n7) {$7$};
  \node[main] [draw, above left of=n6] (n5p) {$5'$};
  \node[main] [draw, above right of=n7] (n2) {$2$};

  \node[main] [draw, below of=n8] (n1) {$1$};
  \node[main] [draw, below of=n9] (n1p) {$1'$};
  \node[main] [draw, below left of=n1] (n3p) {$3'$};
  \node[main] [draw, below right of=n1p] (n3) {$3$};
  \node[main] [draw, below of=n3p] (n10) {$10$};
  \node[main] [draw, below of=n3] (n11) {$11$};
  \node[main] [draw, below left of=n10] (n4p) {$4'$};
  \node[main] [draw, below right of=n11] (n4) {$4$};

  \draw[red, dotted] (n8) to [out=-45,in=225] (n9);
  \draw[red, dotted] (n1) to (n3p);
  \draw[red, dotted] (n3) to (n1p);
  \draw[red] (n8) to [out=-135,in=90] (n3p);
  \draw[red] (n3) to [out=90,in=-45] (n9);
  \draw[red] (n1) to [out=45,in=135] (n1p);

  \draw[teal, dotted] (n3p) to (n10);
  \draw[teal, dotted] (n11) to (n3);
  \draw[teal, dotted] (n4p) to (n4);
  \draw[teal] (n3p) to (n3);
  \draw[teal] (n4p) to (n10);
  \draw[teal] (n11) to (n4);

  \draw[brown, dotted] (n5) to [out=-135,in=100] (n4p);
  \draw[brown, dotted] (n4) to [out=80,in=-45] (n2p);
  \draw[brown, dotted] (n7) to (n6);
  \draw[brown] (n7) to (n2p);
  \draw[brown] (n5) to (n6);
  \draw[brown] (n4) to [out=-160,in=-20] (n4p);

  \draw[cyan, dotted] (n9) to (n7);
  \draw[cyan, dotted] (n6) to (n8);
  \draw[cyan, dotted] (n2) to (n5p);
  \draw[cyan] (n2) to (n7);
  \draw[cyan] (n6) to (n5p);
  \draw[cyan] (n9) to [out=135,in=45] (n8);

  \draw[blue, dotted] (n2p) to (n5);
  \draw[blue, dotted] (n12) to (n1);
  \draw[blue, dotted] (n1p) to (n13);
  \draw[blue] (n12) to (n5);
  \draw[blue] (n2p) to (n13);
  \draw[blue] (n1p) to [out=-145,in=-45] (n1);

  \draw[black, dotted] (n5p) to [out=45,in=135] (n2);
  \draw[black, dotted] (n13) to [out=-70,in=45] (n11);
  \draw[black, dotted] (n10) to [out=135,in=-110] (n12);
  \draw[black] (n5p) to [out=-135,in=135] (n12);
  \draw[black] (n13) to [out=40,in=-45]  (n2);
  \draw[black] (n10) to (n11);

  \node[main] (G5) at (0, 4.5) {$G_5$};    

  \node[main] (O1) at (-4, 9.5) {};    

  \node[main] [above left of=O1] (a8) {};          
  \node[main] [above right of=O1] (a9) {};         
  \node[main] [draw, left of=a8] (a12) {$12$};
  \node[main] [draw, right of=a9] (a13) {$13$};
  \node[main] [draw, above of=a12] (ax5) {$a^x_5$};
  \node[main] [draw, above of=a13] (an2) {$a^n_2$};
  \node[main] [above right of=ax5] (a6) {};          
  \node[main] [above left of=an2] (a7) {};           
  \node[main] [draw, above left of=a6] (an5) {$a^n_5$};
  \node[main] [draw, above right of=a7] (ax2) {$a^x_2$};

  \node[main] [draw, below of=a8] (ax1) {$a^x_1$};
  \node[main] [draw, below of=a9] (an1) {$a^n_1$};
  \node[main] [draw, below left of=ax1] (an3) {$a^n_3$};
  \node[main] [draw, below right of=an1] (ax3) {$a^x_3$};
  \node[main] [draw, below of=an3] (a10) {$10$};
  \node[main] [draw, below of=ax3] (a11) {$11$};
  \node[main] [draw, below left of=a10] (an4) {$a^n_4$};
  \node[main] [draw, below right of=a11] (ax4) {$a^x_4$};

  \draw[blue, dotted] (an2) to (ax5);
  \draw[blue, dotted] (a12) to (ax1);
  \draw[blue, dotted] (an1) to (a13);
  \draw[blue] (a12) to (ax5);
  \draw[blue] (an2) to (a13);
  \draw[blue] (an1) to [out=-145,in=-45] (ax1);

  \draw[black, dotted] (an5) to [out=45,in=135] (ax2);
  \draw[black, dotted] (a13) to [out=-70,in=45] (a11);
  \draw[black, dotted] (a10) to [out=135,in=-110] (a12);
  \draw[black] (an5) to [out=-135,in=135] (a12);
  \draw[black] (a13) to [out=40,in=-45]  (ax2);
  \draw[black] (a10) to (a11);

  \draw[teal, dotted] (an3) to (a10);
  \draw[teal, dotted] (a11) to (ax3);
  \draw[teal, dotted] (an4) to (ax4);
  \draw[teal] (an3) to (ax3);
  \draw[teal] (an4) to (a10);
  \draw[teal] (a11) to (ax4);

  \node[main] (Ga) at (-4, 14) {$G_a$};    

  \node[main] (O2) at (4, 9.5) {};    

  \node[main] [draw, above left of=O2] (b8) {$8$};
  \node[main] [draw, above right of=O2] (b9) {$9$};
  \node[main] [left of=b8] (b12) {};               
  \node[main] [right of=b9] (b13) {};              
  \node[main] [draw, above of=b12] (bn5) {$b^n_5$};
  \node[main] [draw, above of=b13] (bx2) {$b^x_2$};

  \node[main] [draw, above right of=bn5] (b6) {$6$};
  \node[main] [draw, above left of=bx2] (b7) {$7$};
  \node[main] [draw, above left of=b6] (bx5) {$b^x_5$};
  \node[main] [draw, above right of=b7] (bn2) {$b^n_2$};

  \node[main] [draw, below of=b8] (bn1) {$b^n_1$};
  \node[main] [draw, below of=b9] (bx1) {$b^x_1$};
  \node[main] [draw, below left of=bn1] (bx3) {$b^x_3$};
  \node[main] [draw, below right of=bx1] (bn3) {$b^n_3$};
  \node[main] [below of=bx3] (b10) {};               
  \node[main] [below of=bn3] (b11) {};               
  \node[main] [draw, below left of=b10] (bx4) {$b^x_4$};
  \node[main] [draw, below right of=b11] (bn4) {$b^n_4$};

  \draw[red, dotted] (b8) to [out=-45,in=225] (b9);
  \draw[red, dotted] (bn1) to (bx3);
  \draw[red, dotted] (bn3) to (bx1);
  \draw[red] (b8) to [out=-135,in=90] (bx3);
  \draw[red] (bn3) to [out=90,in=-45] (b9);
  \draw[red] (bn1) to [out=45,in=135] (bx1);

  \draw[brown, dotted] (bn5) to [out=-135,in=100] (bx4);
  \draw[brown, dotted] (bn4) to [out=80,in=-45] (bx2);
  \draw[brown, dotted] (b7) to (b6);
  \draw[brown] (b7) to (bx2);
  \draw[brown] (bn5) to (b6);
  \draw[brown] (bn4) to [out=-160,in=-20] (bx4);

  \draw[cyan, dotted] (b9) to (b7);
  \draw[cyan, dotted] (b6) to (b8);
  \draw[cyan, dotted] (bn2) to (bx5);
  \draw[cyan] (bn2) to (b7);
  \draw[cyan] (b6) to (bx5);
  \draw[cyan] (b9) to [out=135,in=45] (b8);

  \node[main] (Gb) at (4, 14.5) {$G_b$};    

\end{tikzpicture}
  \caption{Connected, clean, odd, non-2-splittable, non-Hamiltonian 2-dd
    $G_5 \in \fullG^6_6$ formed by splicing 2-graphs $G_a, G_b \in \partialG^{6,clean}_3$
    with 4 saturated vertices in each.}
  \label{fig-nonham-6-ac}
\end{figure}
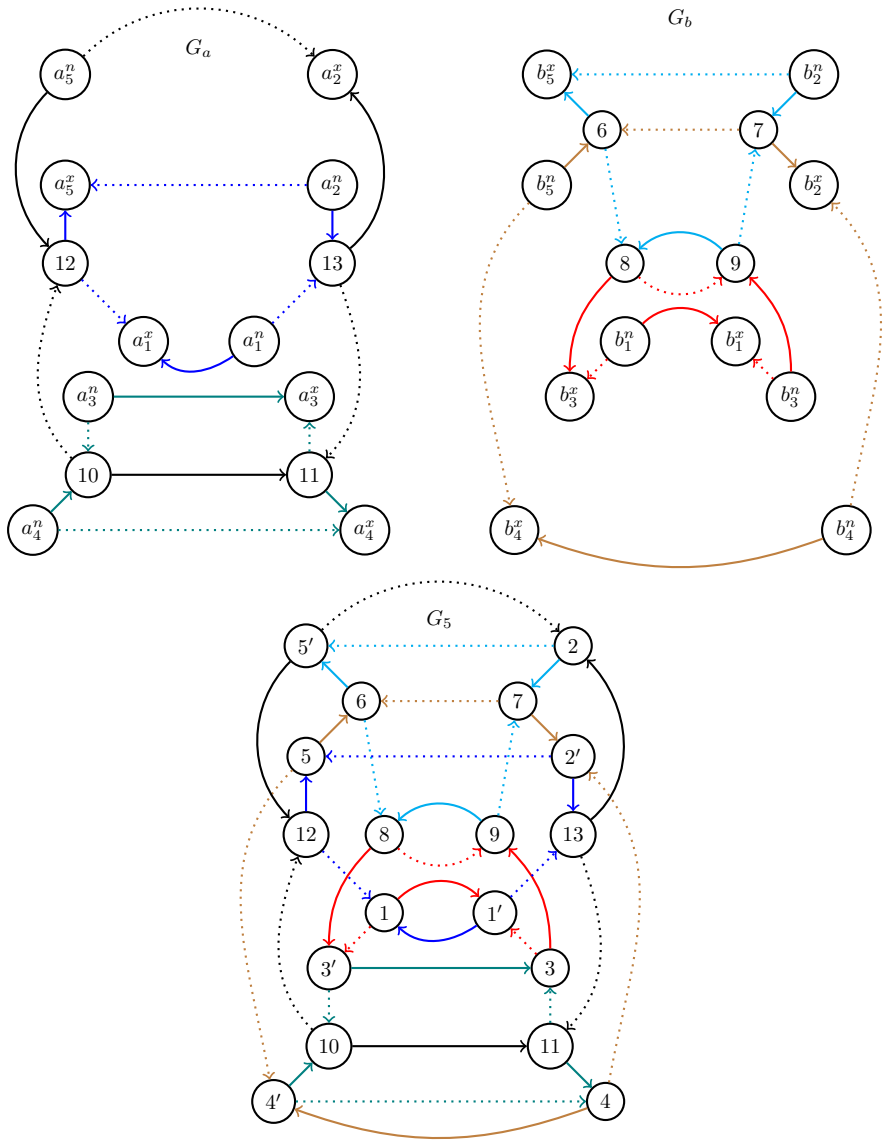

As a second example, consider the open 2-graphs $G_i$, from
$\partialG^{6,clean}_2$, each with 2 saturated vertices and 4 exit/entry vertices, shown
in Figure \ref{fig-2ac2sat}.

\begin{figure}[htbp]
  \centering
  \begin{tikzpicture}[->, node distance={10mm}, thick, main/.style = {circle}]    %
\tikzstyle{every text node part}=[font=\tiny, inner sep=.3]

\node[main] (a5) [draw] {}; 
\node[main] (a6) [draw, right of=a5] {}; 
\node[main] (a1) [draw, above right of=a6] {$1$};
\node[main] (a4) [draw, above left of=a5] {$4$};
\node[main] (a-n2) [draw, above right of=a4] {$2$};  
\node[main] (a-x2) [draw, right of=a-n2] {$2$};      
\node[main] (b1) [draw, below right of=a6] {$1$}; 
\node[main] (b4) [draw, below left of=a5] {$4$}; 
\node[main] (b-n3) [draw, below left of=b1] {$3$};  
\node[main] (b-x3) [draw, left of=b-n3] {$3$};      
\node (G1) at (0.6, 0.7) [font=\small] {$G_1$};

\draw[red] (a1) -> (a6);
\draw[red] (a5) -> (a4);
\draw[red] (a-n2) -> (a-x2);
\draw[red, dotted] (a-n2) -> (a4);
\draw[red, dotted] (a1) -> (a-x2);
\draw[red, dotted] (a5) to [out=315,in=-135,looseness=1.5] (a6);
\draw[blue] (a6) to [out=135,in=45,looseness=1.5] (a5);
\draw[blue] (b4) -> (b-x3);
\draw[blue] (b-n3) -> (b1);
\draw[blue, dotted] (a6) -> (b1);
\draw[blue, dotted] (b4) -> (a5);
\draw[blue, dotted] (b-n3) -> (b-x3);

\node (G2) at (3.7, 1.5) [font=\small] {$G_2$};
\node[main] at (3.7,1) (a2-x3) [draw] {$3$};
\node[main] (a2-n2) [draw, below left=1cm of a2-x3] {$2$};
\node[main] (a2-x2) [draw, below of=a2-n2] {$2$};
\node[main] (a2-n1) [draw, below right=1cm of a2-x2] {$1$};
\node[main] (a25) [draw, below right=1cm of a2-x3] {};
\node[main] (a26) [draw, below of=a25] {};
\node[main] (a2-n3) [draw, left of=a25] {$3$};
\node[main] (a2-n4) [draw, right of=a25] {$4$};
\node[main] (a2-x1) [draw, left of=a26] {$1$};
\node[main] (a2-x4) [draw, right of=a26] {$4$};
\draw[red] (a25) -> (a2-x3);
\draw[red] (a2-n2) -> (a2-x2);
\draw[red] (a2-n1) -> (a26);
\draw[red, dotted] (a25) -> (a26);
\draw[red, dotted] (a2-n1) -> (a2-x2);
\draw[red, dotted] (a2-n2) -> (a2-x3);
\draw[blue] (a2-n3) -> (a25);
\draw[blue] (a2-n4) -> (a2-x4);
\draw[blue] (a26) -> (a2-x1);
\draw[blue, dotted] (a26) -> (a2-x4);
\draw[blue, dotted] (a2-n4) -> (a25);
\draw[blue, dotted] (a2-n3) -> (a2-x1);

\node (G3) at (8.4, 0.3) [font=\small] {$G_3$};
\node[main] at (8.4, 1.5) (a35) [draw] {};
\node[main] (a3-n1) [draw, below left of=a35] {$1$};
\node[main] (a3-x1) [draw, below of=a3-n1] {$1$};
\node[main] (a3-n2) [draw, below right of=a35] {$2$};
\node[main] (a3-x3) [draw, below of=a3-n2] {$3$};
\node[main] (a36) [draw, below right of=a3-x1] {};
\node[main] (a3-x4) [draw, left of=a3-n1] {$4$};
\node[main] (a3-n4) [draw, below of=a3-x4] {$4$};
\node[main] (a3-x2) [draw, right of=a3-n2] {$2$};
\node[main] (a3-n3) [draw, below of=a3-x2] {$3$};
\draw[red] (a3-n2) -> (a35);
\draw[red] (a3-n1) -> (a3-x1);
\draw[red] (a36) -> (a3-x3);
\draw[red, dotted] (a3-n1) -> (a35);
\draw[red, dotted] (a3-n2) -> (a3-x3);
\draw[red, dotted] (a36) -> (a3-x1);
\draw[blue] (a35) -> (a3-x2);
\draw[blue] (a3-n3) -> (a36);
\draw[blue] (a3-n4) -> (a3-x4);
\draw[blue, dotted] (a35) -> (a3-x4);
\draw[blue, dotted] (a3-n4) -> (a36);
\draw[blue, dotted] (a3-n3) -> (a3-x2);

\node (G4) at (12.5, 0) [font=\small] {$G_4$};
\node[main] at (11.5, 0) (a45) [draw] {};
\node[main] (a4-n4) [draw, above of=a45] {$4$};
\node[main] (a4-x4) [draw, above right=1cm of a4-n4] {$4$};
\node[main] (a4-n2) [draw, below right=1cm of a45] {$2$};
\node[main] (a46) [draw, above right=1cm of a4-n2] {};
\node[main] (a4-n3) [draw, above of=a46] {$3$};
\node[main] (a4-x2) [draw, below of=a45] {$2$};
\node[main] (a4-n1) [draw, below right=1cm of a4-x2] {$1$};
\node[main] (a4-x1) [draw, above right=1cm of a4-n1] {$1$};
\node[main] (a4-x3) [draw, above right=1cm of a45] {$3$};
\draw[blue] (a4-n4) -> (a4-x4);
\draw[blue] (a4-n3) -> (a46);
\draw[blue] (a4-n2) -> (a45);
\draw[blue, dotted] (a4-n4) -> (a45);
\draw[blue, dotted] (a4-n2) -> (a46);
\draw[blue, dotted] (a4-n3) -> (a4-x4);
\draw[red] (a45) -> (a4-x2);
\draw[red] (a4-n1) -> (a4-x1);
\draw[red] (a46) -> (a4-x3);
\draw[red, dotted] (a45) -> (a4-x3);
\draw[red, dotted] (a46) -> (a4-x1);
\draw[red, dotted] (a4-n1) -> (a4-x2);

\end{tikzpicture}
  \caption{Open 2-graphs in $\partialG^{6,clean}_2$ with 2 saturated vertices}
  \label{fig-2ac2sat}
\end{figure}

\begin{table}[htbp]
  \caption{Open routes and residues of 2-graphs in Figure~\ref{fig-2ac2sat}}
  \centering
  \begin{tabular}{|l|l|l|l|l|}
    \hline\hline
    & $G_1$ & $G_2$ & $G_3$ & $G_4$ \\
    \hline\hline
    & $I$ & $I$ & $I$ & $I$ \\
    Open routes & $a = (143)$ & $a = (143)$ & $a = (14)(23)$ & $a = (12)(34)$ \\
                & $b = (124)$ & $b = (123)$ & $b = (123)$    & $b = (123)$ \\
                &             &             & $c = (243)$    & $c = (234)$ \\
    \hline
    Residue     & $\{(14)\}$  & $\{(13)\}$  & $\{(23)\}$  & $\varnothing $ \\
    \hline
  \end{tabular}
  \label{tab-2ac2sat}
\end{table}

The arcs of the two ACs are shown in red and blue with solid lines for forward arcs and
dotted for backward. Their exit and entry vertices are labelled with $1..4$ so that the
open routes can be easily mapped to elements of $S_4$; for example, in $G_1$ the open
factor obtained by choosing the solid red and dotted blue arcs becomes the identity $I$.
Their open routes and residues are shown in Table~\ref{tab-2ac2sat}. $G_1$ and $G_2$ have
3 open routes, $G_3$ and $G_4$ have 4. In all cases, the residue is either singleton or
empty.

Any pair of these graphs (including 2 copies of the same graph) can be spliced in
$4! \times 4! = 576$ ways; among such graphs, the even ones are trivially
non-Hamiltonian; the odd ones are all, by the above corollary, Hamiltonian. $G_4$ allows
a stronger claim since it has empty residue: Any {\bf open} 2-graph with 4 exit vertices
(with any number of odd ACs of any size) can be spliced with it; such a spliced graph,
if odd, must be Hamiltonian.

As a final example, there are 15 connected graphs in $\partialG^{6,clean}_3$ each with
four saturated vertices, five exit and five entry vertices, five open routes and five
elements in each residue. The open route set of one of them is
$P = \{ I, (1 2 5 4 3), (3 5 4), (1 2 4), (1 2 5) \}$ and the corresponding residue
is $R_P = \{ (2 3 5), (1 3 5), (1 2)(4 5), (1 2)(3 4), (1 2)(3 5) \}$. As expected,
since $n = 5$ is odd, $P$, $C_n$ and $R_P$, are all even and, the presence of
a cyclic permutation in $P$ excludes the identity from $R_P$ and the presence of the
identity excludes all of $C_n$. Since the cardinalities of the residues and open route
sets are equal, Corollary (a) of the above Theorem is not applicable; nevertheless, we
find that if the result of splicing any pair of these graphs (irrespective of the
splicing permutations used) is odd, it must be Hamiltonian by Theorem~\ref{thm-residue}
since the equivalence class under biconjugation of each of the open route sets is
disjoint from the set of 15 residues.

\section{Reduction}
Suppose $G = (V, A, C) \in \fullG^6$ is odd. We provided a few techniques in
[\ref{ref-ramroute}] for reducing $G$, in special cases, to either an H-equivalent 2-dd
$G' = (V', A', C') \in \fullG^6$ with $|C'| < |C|$ or to a collection of graphs
$G_i = (V_i, A_i, C_i) \in \fullG^6$ such that the collection of their AC sets
\( \{C_i\} \) is a partition of $C$ and, if $G_i$ is non-Hamiltonian for some $i$, then
$G$ must be non-Hamiltonian. During this process of reduction, the non-Hamiltonicity of
$G$ may become apparent (by one of the graphs becoming disconnected or even or containing
one of the two closed ACs $X^c_{2L}, X^c_{1L1S}$, [\ref{ref-ramroute}, Figure~1]).
Briefly these are:
\begin{enumerate}
\item
  Suppose $G$ has a dirty AC $X$; if $X$ is closed, $G$ is non-Hamiltonian; otherwise,
  the quotient $G/X$ has a unique minor $G'$; if $G'$ is disconnected, $G$ is
  non-Hamiltonian, otherwise the process can be repeated if $G'$ is also dirty
  ([\ref{ref-ramroute}, Proposition~7]).
\item
  If $G$ is 2-splittable, the unique exit/entry vertex pair of each split component can
  be spliced; if either component is even, $G$ is non-Hamiltonian; otherwise, both are
  odd and the process can be repeated if one of them is also splittable
  ([\ref{ref-ramroute}, Theorem~3]).
\item
  If $G$ has a subgraph $F$ with a unique open route, the quotient $G/F$ has a unique
  minor $G'$ that is H-equivalent to $G$; if $G'$ is disconnected, $G$ must be
  non-Hamiltonian; otherwise the process can be repeated if $G'$ also has a subgraph with
  a unique route ([\ref{ref-ramroute}, Theorem~4]).
\end{enumerate}

We now show additional techniques for reducing the AC count using
Theorem~\ref{thm-residue}.

If $F$ is a subgraph of $G$ and $\overline{F}$ its complement, let $W$ be the set of
vertices common to both and $|W| = n$. Clearly, $W$ is a split set with split components
$F$ and $\overline{F}$; by labelling these split vertices with $[1,n]$ in each split
component so that $v^{in}$ and $v^{out}$ are labelled with the same value for each
$v \in W$, we can write $G = \splicedG(F, \overline{F}, I, I)$ where $I$ is the identity
in $S_n$. Suppose $F'$ is another 2-graph with the same number of exit vertices as $F$
and $R_F = R_{F'}^{x,y}$ for some $x, y \in S_n$ where we use $R_F$ to mean the residue of
the set of open routes of $F$. By Theorem~1, Corollary (b), we can
replace $F$ with $F'$ in $G$ where each spliced exit [entry] vertex $u$ [$v$] of $F$ is
replaced by the vertex $y^{-1}(u)$ [$y^{-1}(v)$] of $F'$. The resulting 2-dd $G'$ will be
H-equivalent to $G$. This transformation is clearly useful if $F'$ has fewer ACs than
$F$ since it reduces the exponent. We have just proved:

\begin{lemma} \label{lem-replace-subgraph}
  Suppose $F$ is a subgraph of $G$ and $F'$ is an arbitrary element of $\partialG^6$
  with the same number of exit vertices as $F$ and $F' \sim F$ (i.e. $R_F$ and $R_{F'}$
  are biconjugates). We can replace $F$ with $F'$ in $G$ to get an H-equivalent graph
  $G' \in \fullG^6$.
\end{lemma}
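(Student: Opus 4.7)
The plan is to read this lemma as the graph-theoretic restatement of Corollary~(b) of Theorem~\ref{thm-residue}; the combinatorial content has been established at the level of permutation sets, and what remains is to expose the splicing structure of $G$ and translate the substitution on permutation sets back to a vertex-relabelling of $F'$.

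First I would identify $G$ as a splicing. Let $W$ be the set of saturated vertices of $G$ incident to arcs of both $F$ and $\overline{F}$, and set $n = |W|$. Splitting every vertex of $W$ separates $G$ into the two connected open 2-graphs $F$ and $\overline{F}$; the $n$ exit vertices of $F$ are identified with the $n$ entry vertices of $\overline{F}$, and vice versa. Labelling $W$ by $[1,n]$ consistently on both sides --- so that $v^{in}$ and $v^{out}$ inherit the same label for every $v \in W$ --- makes splicing and splitting mutually inverse and yields $G = \splicedG(\overline{F}, F, I, I)$, with $I \in S_n$ the identity.

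Next I invoke the residue equivalence. Since $F' \sim F$ and both have $n$ exit vertices (and hence $n$ entry vertices), the open-route sets of $F$ and $F'$ both live in $S_n$. Let $Q_1, Q_2$ denote the open routes of $F, F'$ respectively, and fix $a,b \in S_n$ with $R_{Q_2} = R_{Q_1}^{a,b}$; the existence of such $a,b$ follows from Proposition~\ref{prop-biconjugate-residue} once a witness to $F \sim F'$ is chosen. Applying Corollary~(b) of Theorem~\ref{thm-residue} with $P = $ open routes of $\overline{F}$, splicing permutations $x = y = I$, and the $Q_1, Q_2, a, b$ above, yields H-equivalence of $\splicedG(\overline{F}, F, I, I) = G$ and $\splicedG(\overline{F}, F', b, a) = G'$. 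The latter is exactly ``$G$ with $F$ replaced by $F'$'', where the splicing permutations $b, a$ encode the relabelling of the shared vertices of $F'$ described in the paragraph preceding the lemma.

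The only obstacle is bookkeeping: verifying that the spliced graph $\splicedG(\overline{F}, F', b, a)$ coincides, up to isomorphism, with the concrete ``replacement'' construction prescribed, and confirming $G' \in \fullG^6$. Saturation is automatic since splicing two open 2-graphs yields a saturated 2-digraph; AC-size $6$ is preserved because $F' \in \partialG^6$ and $\overline{F}$ inherits its AC sizes from $G \in \fullG^6$, while splicing never alters individual ACs. No deeper content is needed beyond what Corollary~(b) already supplies.
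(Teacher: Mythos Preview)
Your proposal is correct and follows essentially the same route as the paper: the paper's proof (given in the paragraph immediately preceding the lemma) likewise takes $W$ to be the common vertices of $F$ and $\overline{F}$, labels them so that $G$ becomes a splicing with identity permutations, and then invokes Corollary~(b) of Theorem~\ref{thm-residue}. Your ordering $G = \splicedG(\overline{F}, F, I, I)$ actually matches the literal form of that corollary more directly than the paper's $G = \splicedG(F, \overline{F}, I, I)$; one minor quibble is your assertion that $F$ and $\overline{F}$ are \emph{connected}, which is neither guaranteed nor needed for the argument.
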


When mapping the open routes of an open 2-graph to permutations by labeling the
exit/entry vertices with [1,n], we can, wlog, choose a labelling which makes one of the
open routes the identity. This labelling simplifies some proofs since it makes the open
route set a subset of $A_n$ containing the identity.

\begin{lemma} \label{lem-3-exit} Let $a \in A_n$ be a 3-cycle, $c \in C_n$ and $X$ a
  single clean AC with 6 arcs (i.e. $X_{clean}$ of [\ref{ref-ramroute}, Figure~1].
  \begin{enumerate}[(a)]
  \item \label{lem-A1} One of $\{ac, a^{-1}c\}$ is cyclic and the other has size 3.
  \item \label{lem-A2} The sets $E_a - C_n$ and $E_{a^{-1}} - C_n$ are the same (i.e. the
    exclusion sets of $a$ and $a^{-1}$ differ only in their subsets of cyclic
    permutations).
  \item \label{lem-A3} All three subsets
    $P_1 = \{I, a\}, P_2 = \{I, a^{-1}\}, P_3 = \{I, a, a^{-1}\}$ of $A_n$ have the same
    residue; if $n = 3$, this residue is empty.\\
  \item \label{lem-empty-residue} $R_X = \varnothing$.
  \end{enumerate}
\end{lemma}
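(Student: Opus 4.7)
The four parts form a short chain of dependencies: (a) is the combinatorial core, (b) follows from (a) by a symmetry argument, (c) is a direct set-theoretic consequence of (b), and (d) recognises $X_{clean}$ as an instance of the configuration in (c). We prove them in this order.

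For (a), we exploit conjugation-invariance of cycle type to reduce to a canonical form: conjugating both $a$ and $c$ by a common permutation, we may take $c = (1, 2, \ldots, n)$, while $a$ becomes a 3-cycle on some triple $\{i_1, i_2, i_3\}$ with $i_1 < i_2 < i_3$. Analyse the case $a = (i_1\,i_2\,i_3)$ (the other writing gives $a^{-1}$ and is symmetric). Then $ac(x) = a(x+1)$ with indices mod $n$, so $ac$ agrees with the shift $x \mapsto x+1$ except at the three jump positions $i_\ell - 1$. Tracing the orbit of $i_1$ under $ac$ through three consecutive blocks $[i_1, i_2-1]$, $[i_3, i_1-1]$ (wrapping mod $n$), and $[i_2, i_3-1]$ shows it sweeps all of $[1, n]$ in a single cycle, so $\size{ac} = 1$. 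The same analysis for $a^{-1}c$ has jumps $i_\ell - 1 \mapsto i_{\ell-1}$, so each block closes on itself, giving $\size{a^{-1}c} = 3$. The main obstacle is careful index bookkeeping.

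For (b), given $y = a^{-1}c \in E_a \setminus C_n$ with $c \in C_n$, part (a) forces $ac \in C_n$. Setting $c' = ac$ and using $a^3 = I$ gives $y = a^2 c = a c' \in E_{a^{-1}}$, and $y \notin C_n$ by hypothesis, so $y \in E_{a^{-1}} \setminus C_n$; the reverse inclusion is symmetric. For (c), direct computation gives
\[ E_{P_1} = C_n \cup E_a, \qquad E_{P_2} = C_n \cup E_{a^{-1}}, \qquad E_{P_3} = C_n \cup E_a \cup E_{a^{-1}}, \]
and part (b) implies $C_n \cup E_a = C_n \cup E_{a^{-1}}$, so all three exclusion sets coincide, and hence so do the residues. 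For the $n = 3$ claim, a short computation in $S_3$ (using $a^{-1} C_3 = \{I, (1\,2\,3)\}$) gives $E_{P_1} = A_3$; since $\parity{P_1} \ne \parity{3}$, we get $R_{P_1} = A_3 - A_3 = \varnothing$.

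For (d), the 2-digraph $X_{clean}$ has 6 distinct vertices (by cleanness) partitioned as 3 entries and 3 exits, and admits exactly two factors (all-forward or all-backward), hence two open routes. Labelling entry and exit vertices in $[1, 3]$ so that the backward factor represents the identity forces the forward factor to be a 3-cycle $a \in S_3$; thus the open-route set is $P_1 = \{I, a\}$ with $n = 3$, and part (c) gives $R_X = \varnothing$.
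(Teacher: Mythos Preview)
Your proof is correct and follows essentially the same route as the paper's. The only cosmetic difference is in part~(a): the paper rotates $c$ so that $c_1 = a_1$ and then splits into two cases according to whether the cyclic order of $a_2, a_3$ within $c$ matches that in $a$, whereas you conjugate to put $c$ in the canonical form $(1,2,\ldots,n)$ and split according to whether $a = (i_1\,i_2\,i_3)$ or its inverse --- these normalizations are equivalent and lead to the same explicit cycle computations. Parts (b)--(d) are argued identically in both.
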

\begin{proof}
  Suppose $a = (a_1, a_2, a_3)$ and $c = (c_1, \cdots, c_n)$; we can assume wlog that
  $c$ is rotated so that $c_1 = a_1$. There are 2 cases, depending on whether the
  orientations of $a$ and $c$ are the same or differ:
  \begin{enumerate}
  \item[] {\bf Case A}: Orientations same. So,
    \( c = (c_1 = a_1, c_2, \cdots, c_i = a_2, c_{i+1}, \cdots, c_j = a_3, c_{j+1},
            \cdots, c_n) \implies \)
    \begin{equation*}
      \begin{aligned}
        ac &= (c_1 = a_1, c_{i+1}, \cdots, c_j = a_3, c_2, \cdots, c_i = a_2, c_{j+1},
               \cdots, c_n) \; \text{and}\\
        a^{-1}c &= (c_1 = a_1, c_{j+1}, \cdots, c_n)(c_i = a_2, c_2, \cdots, c_{i-1})
                (c_j = a_3, c_{i+1}, \cdots, c_{j-1})
      \end{aligned}
    \end{equation*}
  \item[] {\bf Case B}: Orientations differ. So,
    \( c = (c_1 = a_1, c_2, \cdots, c_i = a_3, c_{i+1}, \cdots, c_j = a_2, c_{j+1},
            \cdots, c_n) \implies \)
    \begin{equation*}
      \begin{aligned}
        ac &= (c_1 = a_1, c_{j+1}, \cdots, c_n)(c_j = a_2, c_{i+1}, \cdots, c_{j-1})
              (c_i = a_3, c_2, \cdots, c_{i-1}) \; \text{and}\\
        a^{-1}c &= (c_1 = a_1, c_{i+1}, \cdots, c_{j-1}, c_j = a_2, c_2, \cdots,
                   c_i = a_3, c_{j+1}, \cdots c_n)
      \end{aligned}
    \end{equation*}
  \end{enumerate}

  So \ref{lem-A1} is proved. For \ref{lem-A2}, suppose $x \in E_a - C_n, x = a^{-1}c$
  for some $c \in C_n$. So we must have Case A above where $a^{-1}c$ has size 3 and $ac$
  is cyclic. Now, $x = a^{-1}c = a(ac) \in E_{a^{-1}} - C_n$ and \ref{lem-A2} is proved.

  \ref{lem-A3} follows from  \ref{lem-A2} by Remark~\ref{rem-residue-empty} since $I$
  excludes all of $C_n$ (if $n = 3$, $A_n$ has only 3 elements, all of which are
  excluded by all three sets).
  Finally, \ref{lem-empty-residue} follows from \ref{lem-A3} since $P_1$ is the open
  route set of $X$ (with appropriate vertex labelling).
\end{proof}

\begin{prop} \label{prop-split6}
  Suppose $G \in \partialG_{odd}$ is open and has $3$ exit and $3$ entry
  vertices and let $P$ be its open route set. Then, either $P = \{I\}$ or $P$ is one of
  the sets of Lemma~\ref{lem-3-exit}\ref{lem-A3}.
\end{prop}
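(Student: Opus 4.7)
The plan is a short chain of observations made possible by $|A_3| = 3$. First, since $G$ is open there exists at least one open factor $F_0$; I would relabel the entry and exit vertices with $[1,3]$ so that each entry vertex $v$ and its image $r_{F_0}(v) \in V_{exit}$ receive the same integer. Under this (WLOG) choice of labelling, the permutation in $S_3$ corresponding to $F_0$ is the identity, so $I \in P$.

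Next, by the result $[\ref{ref-ramroute}, \text{Corollary (a), Theorem 2}]$ recalled at the opening of Section~\ref{sec-residues}, the open route set $P$ of an open $2$-graph in $\partialG_{odd}$ is uniform. Combined with $\parity{I} = 0$ and $I \in P$, uniformity forces $P \subseteq A_3 = \{I, a, a^{-1}\}$, where $a$ is a $3$-cycle. Therefore the non-empty subsets of $A_3$ containing $I$ are exactly $\{I\}$, $\{I, a\}$, $\{I, a^{-1}\}$, and $\{I, a, a^{-1}\}$; the last three are $P_1, P_2, P_3$ of Lemma~\ref{lem-3-exit}\ref{lem-A3}, matching the statement.

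There is essentially no obstacle once uniformity and $I \in P$ are in hand: the tiny size of $A_3$ closes the argument by pure enumeration. The only care needed is in the WLOG relabelling step --- different choices of initial labelling may swap the roles of $a$ and $a^{-1}$, but this ambiguity is already absorbed by the proposition, which lists both $P_1 = \{I,a\}$ and $P_2 = \{I,a^{-1}\}$ as admissible outcomes.
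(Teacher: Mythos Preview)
Your proof is correct and follows essentially the same approach as the paper's own proof: relabel so that $I \in P$, invoke uniformity to obtain $P \subseteq A_3$, and then enumerate the subsets of $A_3 = \{I, a, a^{-1}\}$ containing the identity. The paper's version is terser (it absorbs the relabelling and uniformity into the phrase ``as observed above''), but the logical content is identical.
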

\begin{proof} As observed above, we can assume the unsaturated vertices are labelled so
  that $P \subseteq A_3$ and contains the identity $I$. Since the only other elements of
  $A_3$ are the two 3-cycles that are mutual inverses, the result must hold.
\end{proof}

The proof of the next two propositions relies partly on computational verification of
some cases.
\begin{prop} \label{prop-reduce-2-AC}
  Suppose $G \in \fullG^{6,clean}$ is odd, connected and not 2-splittable and $F$ is a
  subgraph with 2 ACs and $s$ saturated vertices. If $s > 2$, we can either collapse $F$
  entirely or replace it with a single AC to get an H-equivalent odd graph
  $G' \in \fullG^{6}$.
\end{prop}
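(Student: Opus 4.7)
The plan is a case analysis on the number $n$ of entry (equivalently, exit) vertices of $F$. Since each clean $6$-arc AC passes through three in-vertices and three out-vertices, all distinct, and a vertex of $F$ becomes saturated exactly when an in-vertex of one AC coincides with an out-vertex of the other (any other coincidence would violate the $2$-digraph degree constraint), we get $n = 6 - s$. The hypothesis $s > 2$ leaves $n \in \{0, 1, 2, 3\}$; the edge case $n = 0$ forces $F = G$ by connectedness of $G$ and is vacuous, so I would treat $n \in \{1, 2, 3\}$.

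For $n \in \{1, 2\}$ label the $n$ exit and $n$ entry vertices of $F$ so that one open route becomes the identity. Each of the two ACs has $r = 3$, so $F \in \partialG_{odd}$ and its open route set $P$ is uniform; with $I \in P$ this forces $P \subseteq A_n$. Since $|A_n| = 1$ for $n \in \{1,2\}$ we conclude $P = \{I\}$, i.e.\ $F$ has a unique open route. Technique~3 recalled at the top of this section (Theorem~4 of [\ref{ref-ramroute}]) then produces the quotient $G/F$, which collapses $F$ entirely to an H-equivalent odd $G' \in \fullG^{6}$.

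For $n = 3$ apply Proposition~\ref{prop-split6}: either $P = \{I\}$, in which case the same quotient argument collapses $F$, or $P$ is one of the three subsets of $A_3$ described in Lemma~\ref{lem-3-exit}\ref{lem-A3}. In the second case Lemma~\ref{lem-3-exit}\ref{lem-A3} gives $R_P = \varnothing$, and Lemma~\ref{lem-3-exit}\ref{lem-empty-residue} gives $R_X = \varnothing$ for any single clean $6$-arc AC $X$ (which, viewed on its own, has three entries and three exits). By Remark~\ref{rem-residue-empty}, $P \sim \{\text{open route of } X\}$, so Lemma~\ref{lem-replace-subgraph} replaces $F$ by $X$ inside $G$ and yields an H-equivalent $G' \in \fullG^{6}$ with exactly one fewer AC.

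The step I expect to be the main obstacle is showing that the resulting $G'$ actually lies in $\fullG^{6}$, and in particular remains connected. The substitution of Lemma~\ref{lem-replace-subgraph} is a purely combinatorial exchange of subgraphs whose residues are biconjugate, but it does not automatically preserve connectedness: a priori the three (or fewer) splice points of $F$ could straddle a would-be separation of the complement. Here one must use that $G$ is not $2$-splittable and that $F$ meets the rest of $G$ through at most three external vertices to rule this out. The ``partial computational verification'' the author flags just above the proposition is presumably an exhaustive enumeration of the few combinatorial shapes of a clean $2$-AC subgraph with $s \in \{3,4,5\}$ (parameterised by the split $s_1 + s_2 = s$ between the two possible in/out sharing patterns) confirming that both connectedness and oddness of $G$ survive each transformation.
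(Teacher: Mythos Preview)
Your case analysis on $s$ (equivalently $n=6-s$) matches the paper's proof exactly, and the substance of each case is the same: collapse via the unique-open-route theorem when $|A_n|=1$, and replace by a single clean AC via Lemma~\ref{lem-replace-subgraph} when the residue is empty. Two minor differences are worth noting. First, for $s=5$ the paper simply observes that a single entry/exit pair makes $G$ $2$-splittable, contradicting the hypothesis, whereas you treat it uniformly with $s=4$ by collapsing; both are fine. Second, for $s=3$ you invoke Proposition~\ref{prop-split6} to conclude $P=\{I\}$ or $R_P=\varnothing$, while the paper instead states it has \emph{computationally verified} that $P$ is always one of the three non-trivial sets of Lemma~\ref{lem-3-exit}\ref{lem-A3} (so in particular $P\neq\{I\}$). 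Your route is actually cleaner here: Proposition~\ref{prop-split6} already gives everything needed without computation, and the extra information $P\neq\{I\}$ is irrelevant to the conclusion.

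Your closing paragraph, however, misidentifies both the obstacle and the role of the computational step. The proposition does not assert that $G'$ is connected, only that $G'\in\fullG^6$ is odd and H-equivalent to $G$; Lemma~\ref{lem-replace-subgraph} delivers H-equivalence directly, so there is nothing to check about separations of the complement. The ``partial computational verification'' the paper flags is not about connectedness or oddness surviving the swap; it is solely the enumeration (for $s=3$ here, and for $s=5$ in the next proposition) pinning down the possible open-route sets of $F$. As noted, for Proposition~\ref{prop-reduce-2-AC} your use of Proposition~\ref{prop-split6} makes even that enumeration unnecessary.
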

\begin{proof} Clearly, $s = 5$ is not possible, since that would make $G$ 2-splittable.
  If $s = 4$, $F$ must have a unique open route and we can replace $G$ with the unique
  minor in the quotient $G/F$ (Theorem~4 of [\ref{ref-ramroute}]). If $s = 3$, we have
  computationally verified that the set of open routes must be one of the three listed in
  Lemma~\ref{lem-3-exit}\ref{lem-A3} and hence must have empty residue; so we can
  replace it with a single clean AC since it also has empty residue by
  Lemma~\ref{lem-replace-subgraph} and Lemma~\ref{lem-3-exit}\ref{lem-empty-residue}.
\end{proof}

\begin{prop} \label{prop-reduce-3-AC}
  Suppose $G \in \fullG^{6,clean}$ is odd, connected and not 2-splittable and $F$ is a
  subgraph with 3 ACs and $s$ saturated vertices and does not have a subgraph that allows
  reduction by Proposition~\ref{prop-reduce-2-AC}.
  If $s > 4$, we can either collapse $F$
  entirely or replace it with a single AC or a pair of ACs to get an H-equivalent, odd
  graph $G' \in \fullG^{6}$.
\end{prop}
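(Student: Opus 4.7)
The plan is to argue by case analysis on $s$. Since each of the three clean $6$-ACs of $F$ contributes $6$ arcs, and since in any 2-digraph the arc-count equals $2s + 2t$ where $t$ is the common number of entry and exit vertices of $F$, we have $s + t = 9$; the hypothesis $s > 4$ therefore restricts us to $s \in \{5, 6, 7, 8\}$.

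For $s \in \{7, 8\}$ we have $t \in \{1, 2\}$, and uniformity of the open route set $P$ as a subset of $A_t$ forces $|P| \leq 1$. Thus $F$ has a unique open route (or none, if $F$ is closed, in which case $G$ is already non-Hamiltonian and any H-equivalent non-Hamiltonian reduction suffices), and Theorem~4 of [\ref{ref-ramroute}] collapses $F$ entirely via the unique minor of $G/F$, producing an H-equivalent odd $G' \in \fullG^6$ with three fewer ACs. For $s = 6$ and $t = 3$, Proposition~\ref{prop-split6} shows that $P$ is either $\{I\}$ (again apply the quotient-minor collapse) or one of the three sets enumerated in Lemma~\ref{lem-3-exit}\ref{lem-A3}. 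In the latter subcase, $R_P = \varnothing$, and by Lemma~\ref{lem-3-exit}\ref{lem-empty-residue} a single clean $6$-AC $X$ (viewed as a subgraph with $s = 0, t = 3$) also has empty residue; since both residues are trivially biconjugate, Lemma~\ref{lem-replace-subgraph} substitutes $X$ for $F$, reducing the AC count from three to one.

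The substantive case is $s = 5$, $t = 4$, where a $1$-AC replacement is impossible (a single clean $6$-AC has $t = 3 \neq 4$), so we must either collapse or replace with a pair of ACs (whose corresponding $F'$ would necessarily have $s = 2, t = 4$). Here $P$ is a uniform subset of $S_4$ containing the identity after an appropriate relabelling, of size at most $2^3 = 8$. I would proceed by enumerating computationally, in the spirit of the check underlying Proposition~\ref{prop-reduce-2-AC}, all connected, odd, clean $F \in \partialG^6_3$ with five saturated vertices that satisfy the no-$2$-AC-reducibility hypothesis, tabulating the open route set and residue in each instance; then verify that for each equivalence class under biconjugation either $P = \{I\}$ (allowing a quotient-minor collapse via Theorem~4 of [\ref{ref-ramroute}]) or the residue $R_P$ is biconjugate to the residue of some concrete connected $F' \in \partialG^{6,clean}_2$ with $s = 2, t = 4$. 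Lemma~\ref{lem-replace-subgraph} then delivers the required replacement, and oddness of $G'$ follows because the rest of $G$ is unchanged and every $6$-AC is odd.

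The main obstacle is precisely this finite but non-trivial enumeration for $s = 5$: the space of possible $F$'s is combinatorially richer than in the $s = 3$ step of Proposition~\ref{prop-reduce-2-AC}, and the no-$2$-AC-reducibility hypothesis is essential for keeping it tractable by bounding the pairwise overlap of the three ACs of $F$. Beyond this verification step, the argument is a routine assembly of Theorem~\ref{thm-residue}, Lemma~\ref{lem-replace-subgraph}, and the quotient-minor machinery of [\ref{ref-ramroute}].
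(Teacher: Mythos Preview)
Your approach matches the paper's exactly: case split on $s$, with $s \in \{7,8\}$ handled by the unique-route collapse (the paper phrases this as ``similar to the previous proposition''), $s=6$ via Proposition~\ref{prop-split6} and Lemma~\ref{lem-3-exit}, and $s=5$ by computational enumeration followed by Lemma~\ref{lem-replace-subgraph}. The paper's enumeration at $s=5$ yields the sharper outcome that $|R_F| \in \{0,1\}$ in every instance (85 graphs with empty residue, 27 with singleton residue), so the general biconjugacy check you propose reduces immediately to Remarks~\ref{rem-residue-empty} and~\ref{rem-singleton-residue}, with the explicit two-AC replacements being the graphs $G_1,\dots,G_4$ of Figure~\ref{fig-2ac2sat}.
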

\begin{proof} 
  When $s = 7$ or $8$, the argument is similar to the previous proposition; if $s = 6$,
  by Proposition~\ref{prop-split6}, the open route set is either singleton (in which case
  $F$ can simply be collapsed) or $R_F$ is empty and can be replaced by a single clean
  AC. When $s = 5$, we have computationally verified that we must have $|R_F| = 0$ or
  $1$ (there are 85 graphs with $|R_F| = 0$ and 27 with $|R_F| = 1$). If $|R_F| = 0$, we
  can replace $F$ with the 2-AC graph $G^4$ of the previous section
  (Remark~\ref{rem-residue-empty}); if $|R_F| = 1$, we can replace it with any of the
  2-AC graphs $G^1,G^2,G^3$ of the previous section since all of them have singleton
  residue (Remark~\ref{rem-singleton-residue}).
\end{proof}

These results yield two more techniques for reducing the AC count of $G$.

To assess the effectiveness of these techniques, we present some results of generating
and analysing some of these families computationally. 

Looking at all the non-Hamiltonian connected graphs in the families $\fullG^6_k$, we find
that when $k < 6$, non-Hamiltonicity in all cases can be established by the results of
[\ref{ref-ramroute}].

When $k = 6$, as we noted in [\ref{ref-ramroute}], there are 218,161,485 non-isomorphic
graphs in $\fullG^6_6$, of which, only 12,513 are connected, clean, odd and
non-Hamiltonian. The non-Hamiltonicity of 11,320 of these can be established by
[\ref{ref-ramroute}, Theorem~3]: they are 2-splittable with both split components being
even (i.e. all factors have even size) after the requisite splicing. Of the 1,193 that
remain, Propositions~\ref{prop-reduce-2-AC} and \ref{prop-reduce-3-AC} above can
establish the non-Hamiltonicity of 1,177 leaving only 16 ``difficult'' cases (graph
$G_5$ of Figure~\ref{fig-nonham-6-ac} is one of those cases).

The family $\fullG_7^{6,clean}$ has 1,665,207 non-isomorphic connected graphs of which
209,129 are connected, clean, odd, non-2-splittable and non-Hamiltonian; of these:
\begin{enumerate}
\item 178,949 have a 2-AC subgraph with 4 saturated vertices and 23,250 have
  3 saturated vertices, so both groups can be reduced by
  Proposition~\ref{prop-reduce-2-AC} leaving 6,930.
\item 4,014 of the remainder have a 3-AC subgraph with 6 saturated vertices and can be
  reduced by Proposition~\ref{prop-reduce-3-AC} leaving 2,916.
\item {\sl All} 2,916 of these graphs have a 3-AC subgraph with 5 saturated vertices and
  so can be reduced by Proposition~\ref{prop-reduce-3-AC}.
\end{enumerate}

The family $\fullG_8^{6,clean}$ has 35,262,178 non-isomorphic graphs that are connected,
clean, odd, non-2-splittable and non-Hamiltonian; discarding those to which
Propositions~\ref{prop-reduce-2-AC} and \ref{prop-reduce-3-AC} are applicable, we are
left with 79,413 ``difficult'' graphs.

\section{References}
\begin{enumerate}
\item
  {\sl J\'an Plesn\'ik} \label{ref-plesnik}
  {\bf The NP-Completeness of the Hamiltonian Cycle Problem in
    Planar Digraphs with Degree Bound Two}, Inf. Process. Lett. 8(4): 199-201 (1979)
\item
  {\sl Claude Berge} \label{ref-berge}
  {\bf Graphs and Hypergraphs}, North-Holland Publishing Company, (1976)
\item
  {\sl G. H. J. Meredith} \label{ref-meredith}
  {\bf Regular n-valent n-connected nonHamiltonian non-n-edge-colorable graphs},
  Journal of Combinatorial Theory, Series B, Volume 14, Issue 1, February 1973, p. 55-60

\item
  {\sl Ramanath, M. V. S.} \label{ref-ram}
  {\bf Factors in a class of Regular Digraphs}, J. Graph Theory, vol. 9 (1985)
  p. 161-175.
\item
  {\sl Ramanath, M. V. S.} \label{ref-ramroute}
  {\bf Non-Hamiltonian 2-regular Digraphs} (2025)
  https://doi.org/10.48550/arXiv.2507.21381
\end{enumerate}

\clearpage
\end{flushleft}
\end{document}